\theoremstyle{plain}
\newtheorem{theorem}{Theorem}[section]
\newtheorem{lemma}[theorem]{Lemma}
\newtheorem{claim}[theorem]{Claim}
\newtheorem{proposition}[theorem]{Proposition}
\newtheorem{corollary}[theorem]{Corollary}
\newtheorem{conjecture}[theorem]{Conjecture}
\newtheorem{obs}[theorem]{Observation}
\newtheorem{Remark}[theorem]{Remark}
\theoremstyle{remark}
\newcommand{\Z}{\mathbb{Z}}
\newcommand{\QQ}{\mathbb{Q}}
\newcommand{\pr}{\mathbb{P}}
\newcommand{\E}[0]{\mathbb{E}}
\newcommand{\Ee}[0]{{\sf E}}
\newcommand{\sub}[0]{\subseteq}
\newcommand{\sm}[0]{\setminus}
\renewcommand{\dots}[0]{,\ldots,}
\newcommand{\ov}[0]{\overline}
\newcommand{\beq}[1]{\begin{equation}\label{#1}}
\newcommand{\enq}[0]{\end{equation}}
\newcommand{\f}[0]{{\mathcal F}}
\newcommand{\g}[0]{{\mathcal G}}
\newcommand{\h}[0]{{\mathcal H}}
\newcommand{\K}[0]{{\mathcal K}}
\newcommand{\ra}[0]{\rightarrow}
\newcommand{\Ra}[0]{\Rightarrow}
\newcommand{\Lra}[0]{\Leftrightarrow}
\newcommand{\0}[0]{\emptyset}
\newcommand{\C}[2]{\binom{#1}{#2}}
\newcommand{\ga}[0]{\alpha }
\newcommand{\gb}[0]{\beta }
\newcommand{\gc}[0]{\gamma }
\newcommand{\gd}[0]{\delta }
\newcommand{\gD}[0]{\Delta }
\newcommand{\gL}[0]{\Lambda}
\newcommand{\gO}[0]{\Omega}
\newcommand{\gs}[0]{\sigma}
\newcommand{\gz}[0]{\zeta}
\newcommand{\eps}[0]{\varepsilon }
\newcommand{\vt}[0]{\vartheta}
\newcommand{\vs}[0]{\varsigma}
\newcommand{\vp}[0]{\varphi}
\DeclareMathOperator{\Exp}{Exp}
\renewcommand{\1}[0]{\textbf{1}}
\newcommand{\mn}[0]{\medskip\noindent}
\newcommand{\nin}[0]{\noindent}
\title{Thresholds versus fractional expectation-thresholds}
\author{Keith Frankston}
\address{Department of Mathematics,	Rutgers University, Piscataway, NJ 08854, USA}
\email{keith.frankston@math.rutgers.edu}
\author{Jeff Kahn}
\address{Department of Mathematics, Rutgers University, Piscataway, NJ 08854, USA}
\email{jkahn@math.rutgers.edu}
\author{Bhargav Narayanan}
\address{Department of Mathematics, Rutgers University, Piscataway, NJ 08854, USA}
\email{narayanan@math.rutgers.edu}
\author{Jinyoung Park}
\address{Department of Mathematics, Rutgers University, Piscataway, NJ 08854, USA}
\email{jp1324@math.rutgers.edu}
\subjclass[2010]{Primary 05C80; Secondary 60C05, 82B26, 06E30}
\begin{document}

\maketitle
\begin{abstract}
Proving a conjecture of Talagrand, 
a fractional version of the ``expectation-threshold'' conjecture of Kalai and the second author, we show that for any increasing family $\f$ on a finite set $X$, we have $p_c (\f) =O( q_f  (\f) \log \ell(\f))$, where $p_c(\f)$ and $q_f(\f)$ are the threshold and ``fractional expectation-threshold'' of $\f$, and $\ell(\f)$ is the maximum size of a minimal member of $\f$.
This easily implies several heretofore difficult results
and conjectures in probabilistic combinatorics, 
including thresholds for perfect hypergraph matchings (Johansson--Kahn--Vu), bounded degree spanning trees (Montgomery), and bounded degree graphs (new). 
We also resolve (and vastly extend) the ``axial'' version of 
the random multi-dimensional assignment problem
(earlier considered by Martin--M\'{e}zard--Rivoire and Frieze--Sorkin).
Our approach builds on a recent breakthrough of Alweiss, Lovett, Wu and Zhang on the Erd\H{o}s--Rado
``Sunflower Conjecture.'' 
\end{abstract}
\maketitle

\section{Introduction}
Our most important contribution here is the proof of a conjecture of Talagrand~\citep{Talagrand} that is  
a fractional version of the ``expectation-threshold'' conjecture of 
Kalai and the second author~\citep{KK}.  
For an increasing family $\f$ on a finite set $X$, we write (with definitions below)
$p_c(\f)$, $q_f(\f)$ and $\ell(\f)$ for the threshold, fractional expectation-threshold, and size of a largest 
minimal element of $\f$.  In this language, our main result is the following.
\begin{theorem}\label{maintheorem}
There is a universal $K$ such that for every finite $X$ and increasing
$\f\sub 2^X$,
\[p_c (\f) \le K q_f  (\f) \log \ell(\f).\]
\end{theorem}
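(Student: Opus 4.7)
\emph{Setup and outline.} Set $q=q_f(\f)$ and $\ell=\ell(\f)$. The plan has two main parts: first, prove an Alweiss--Lovett--Wu--Zhang-type \emph{spread lemma} stating that a $q$-spread probability measure on sets of size at most $\ell$ is covered by a random $p$-set whenever $p\gtrsim q\log\ell$; second, extract such a spread measure on $\f_{\min}$ from the hypothesis $q_f(\f)\le q$ via LP duality.

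\emph{Spread lemma.} The technical core is to prove: there is a universal $K$ such that if $\mu$ is a probability measure on $2^X$ supported on sets of size at most $\ell$ satisfying $\mu(\{H:S\sub H\})\le q^{|S|}$ for every $\0\neq S\sub X$, then $X_p$ contains some $H$ in the support of $\mu$ with probability at least $1/2$, provided $p\ge Kq\log\ell$. Following ALWZ (as reorganized by Rao), one proves this by writing $X_p$ as a union of $k=\Theta(\log\ell)$ independent layers $W_1,\ldots,W_k$ of density $\Theta(q)$, and inductively showing that after processing $W_1,\ldots,W_i$, either we have already found some $H\sub W_1\cup\cdots\cup W_i$, or there is a conditional measure $\mu_i$ on residual sets $H\sm(W_1\cup\cdots\cup W_i)$ of typical size at most $\ell/2^i$ that remains $q$-spread on the reduced ground set. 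After $O(\log\ell)$ rounds the residual sets are empty, so some $H$ must have been covered.

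\emph{LP-duality reduction.} For the second ingredient, note that the primal LP
\[
\min_{g\ge 0}\Big\{\sum_S g(S)q^{|S|}\ :\ \sum_{S\sub T}g(S)\ge 1\ \ \forall\,T\in\f_{\min}\Big\}
\]
is dual to $\max_{\nu\ge 0}\{\nu(\f_{\min}):\nu(\{T:S\sub T\})\le q^{|S|},\ \forall\,S\}$. Since $q_f(\f)\le q$ is precisely the statement that the primal value is $\le 1/2$, for any $q'>q$ strong duality yields a dual-feasible $\nu$ with $\nu(\f_{\min})>1/2$; renormalizing, $\mu=\nu/\nu(\f_{\min})$ is a probability measure on $\f_{\min}$ with $\mu(\{T:S\sub T\})\le 2(q')^{|S|}\le(2q')^{|S|}$ for every $\0\neq S$. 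Taking $q'=2q$ produces an $O(q)$-spread probability measure on $\f_{\min}$, and applying the spread lemma at $p=K'q\log\ell$ (with $K'$ absorbing the constant loss) yields Theorem~\ref{maintheorem}.

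\emph{Main obstacle.} The principal difficulty is the spread lemma itself, and in particular the single-round ``win or halve'' step: given a random layer $W$ of density $\Theta(q)$, show that either $W$ covers some support element of $\mu$ or, conditionally on $W$, one can extract a measure on $X\sm W$ supported on residuals of (roughly) halved sizes that is still $q$-spread. Handling weighted, non-uniform families (rather than uniform ones, as in the original ALWZ) and extracting the correct dependence on $\log\ell$ (rather than $\log|X|$) is the crux. Everything else --- iterating $O(\log\ell)$ times, the union bound over stages, and the LP reduction --- is routine once this step is in place.
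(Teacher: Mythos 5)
Your architecture matches the paper's: LP duality (the paper's Proposition~\ref{Tal6.7}) converts $q_f(\f)\le q$ into a $(2q)$-spread probability measure supported on $\f$, which (after rationalizing and duplicating edges) reduces the theorem to a covering statement for $\ell$-bounded, $\kappa$-spread hypergraphs (Theorem~\ref{MT}), proved by an iterated ALWZ-style round lemma. The duality step and the reduction are fine. But the two places where you defer to ``ALWZ as reorganized by Rao'' are exactly where the paper has to do new work, and as written your outline has two concrete gaps. First, the single-round step: the paper's Lemma~\ref{ML} is not ALWZ's Lemma~5.7 but a strengthening of it, obtained by splitting the count of bad pairs $(S,W)$ into pathological and nonpathological contributions. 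In the nonpathological count, the number of candidates for $S$ given $T=S\cap\psi(Z)$ is bounded by $B^r|\h|\kappa^{-t}p^{s-t}$ rather than ALWZ's $|\h|\kappa^{-t}$; without the extra factor $p^{s-t}$, the $p^{-s}$ from enumerating $Z=W\cup S$ is not cancelled down to $(p\kappa)^{-t}=C^{-t}$, and one cannot take the layer density to be $C/\kappa$ with $C$ an absolute constant. One is then forced to let $C$ grow, which costs at least an extra logarithmic factor and loses the claimed bound $Kq_f(\f)\log\ell$. So the crux is not only ``handling weighted, non-uniform families'' but a genuine quantitative improvement of the round lemma.

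Second, the endgame. You cannot iterate ``until the residual sets are empty'': the per-round failure probability is of the form $\gd^{-1}C^{-r_{i-1}/3}$, which is small only while the current boundedness parameter $r_i$ is large; once $r_i=O(1)$ each round fails with constant probability and the union bound collapses. The paper therefore stops the shrinking iteration at $r_m=\sqrt{\log\ell}$ and finishes with a separate Janson-inequality phase (Lemma~\ref{LJ} and Corollary~\ref{CJ}) applied to one last layer of density $\log\ell/\kappa$. Some such two-phase structure is needed; an outline that runs the halving all the way down to empty sets will not close.
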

\nin
As observed below, $q_f(\f)$ is a more or less trivial lower bound on $p_c(\f)$, and Theorem~\ref{maintheorem} says this bound is never far from the truth. (Apart from the constant $K$, the upper bound is tight in many of the most interesting cases.) 

Thresholds have been a---maybe \emph{the}---central concern of the study of random discrete structures 
(random graphs and hypergraphs, for example) since its initiation by Erd\H{o}s and R\'enyi~\citep{ER}, with much 
work around identifying thresholds for specific properties (see~\citep{BBbook, JLR}), though it was 
not observed until~\citep{BT} that \emph{every} increasing $\f$ admits a threshold (in the 
Erd\H{o}s--R\'enyi sense; see below). See also~\citep{Friedgut2} for developments, since~\citep{Friedgut}, on the very interesting question of \emph{sharpness} of thresholds.

Our second main result is Theorem~\ref{TFS} below, which was motivated by 
work of Frieze and Sorkin~\citep{FS} on
the  ``random multi-dimensional assignment problem.''  The statement 
is postponed until we have filled in some background, to which we now turn.
(See the beginning of Section~\ref{prelim} for notation not defined here.)

\mn
\textbf{Thresholds.}
For a given $X$ and $p\in [0,1]$, $\mu_p$ is the product
measure on $2^X$ given by $\mu_p(S) = p^{|S|}(1-p)^{|X\sm S|}$.
An $\f\sub 2^X$ is {\em increasing} if
$B\supseteq A\in\f\Ra B\in\f$.
If this is true
(and
$\f\neq 2^X, \0$), then $\mu_p(\f)(:= \sum\{\mu_p(S): S \in \f\}$)
is strictly increasing in $p$,
and the {\em threshold}, $p_c(\f)$,
is the unique $p$ for which
$\mu_p(\f)=1/2$.
This is finer than the original Erd\H{o}s--R\'enyi notion,
according to which
$p^*=p^*(n)$ is \emph{\textbf{a}} threshold for $\f=\f_n$
if
$\mu_p(\f)\ra 0$ when $p\ll p^*$ and $\mu_p(\f)\ra 1$ when $p\gg p^*$.
(That $p_c(\f)$ \emph{is} always an Erd\H{o}s--R\'enyi threshold 
follows from~\citep{BT}.)

Following
\citep{Talagrand4, Talagrand1,Talagrand}, we say $\f$ is
$p$-{\em small} if there is a $\g\sub 2^X$ such that
$\f\sub \langle \g\rangle:=\{T: \exists S\in \g, S\sub T\}$
and
\beq{psmall2}
\mbox{$\sum_{S\in \g}p^{|S|} \leq 1/2$}.
\enq

\nin
Then
$q(\f):= \max\{\mbox{$p$ : $\f$ is $p$-small}\}
$, which we call the {\em expectation-threshold} of $\f$ (note the term is used slightly 
differently in~\citep{KK}), is a trivial lower bound
on $p_c(\f)$, since for $\g$ as above and $T$
drawn from $\mu_p$,
\beq{triv.l.b.}
\mbox{$\mu_p(\f) \leq \mu_p  (\langle \g\rangle)\leq
\sum_{S\in \g}\mu_p(T\supseteq S) =
\sum_{S\in \g}p^{|S|} ~~~(=\E[|\{S\in \g:S\sub T\}|]).$}
\enq
The following statement, the main conjecture (Conjecture~1) of 
\citep{KK}, says that
for {\em any} $\f$, this trivial lower bound on $p_c(\f)$ is close to the truth.
\begin{conjecture}\label{CKK}
There is a universal $K$ such that for every finite $X$ and increasing
$\f\sub 2^X$,
\[   
p_c(\f) \le Kq(\f)\log |X|.
\]   
\end{conjecture}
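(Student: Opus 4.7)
The plan is to derive Conjecture~\ref{CKK} as a direct corollary of Theorem~\ref{maintheorem}, via two elementary observations.

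First, since every minimal member of $\f$ is a subset of $X$, we have $\ell(\f) \le |X|$, and hence $\log \ell(\f) \le \log |X|$.

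Second, I would compare the two expectation-thresholds. Any integer cover $\g \sub 2^X$ certifying that $\f$ is $p$-small yields, via indicator weights, a fractional certificate of the same $\mu_p$-cost, so in the natural formulation of the fractional notion (as used by Talagrand) one expects $q_f(\f) \le q(\f)$. Combined with the first observation, Theorem~\ref{maintheorem} then yields
\[
p_c(\f) \le K\, q_f(\f)\,\log \ell(\f) \le K\, q(\f)\, \log |X|,
\]
which is precisely Conjecture~\ref{CKK}.

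The main potential obstacle lies in verifying the direction of the inequality $q_f(\f) \le q(\f)$. In the textbook LP relaxation of the covering problem, fractional covers strictly generalize integer ones, which would instead give $q_f(\f) \ge q(\f)$---the wrong direction for a direct deduction. Should that be the correct direction in the present setting, the conjecture would not follow from Theorem~\ref{maintheorem} by these elementary means; one would then need to bridge the gap either by establishing an integrality-gap bound of the form $q_f(\f)/q(\f) = O(\log|X|/\log\ell(\f))$, or, perhaps more promisingly, by directly adapting the sunflower-based machinery of Alweiss--Lovett--Wu--Zhang that underlies Theorem~\ref{maintheorem} to operate on integer covers---re-running the iterative sparsification step at the level of set systems rather than weighted covers. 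In either case the proof would be substantially more technical than the two-line corollary sketched above.
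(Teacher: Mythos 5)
The statement you are attempting is Conjecture~\ref{CKK}, which the paper does \emph{not} prove; it is left open there, and your proposal does not close it. The fatal step is the claimed inequality $q_f(\f)\le q(\f)$. The trivial direction is the opposite one: an integer cover $\g$ witnessing that $\f$ is $p$-small yields, via its indicator function $g=\one_{\g}$, a witness that $\f$ is \emph{weakly} $p$-small (the fractional notion is a relaxation, hence easier to satisfy), so $q(\f)\le q_f(\f)$, exactly as recorded in \eqref{triv.l.b.*}. Since Theorem~\ref{maintheorem} bounds $p_c(\f)$ by $Kq_f(\f)\log\ell(\f)$ and $q_f(\f)$ sits \emph{above} $q(\f)$, no elementary substitution carries you from the theorem to Conjecture~\ref{CKK}. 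Your first observation ($\ell(\f)\le|X|$) is fine, but it only handles the logarithmic factor: combined with Theorem~\ref{maintheorem} it yields Conjecture~\ref{CT}, not Conjecture~\ref{CKK}.

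Your hedge correctly diagnoses what is missing: a reverse inequality $q(\f)\ge q_f(\f)/K$, which is precisely Talagrand's Conjecture~\ref{littleTal}. The paper notes that, in light of Theorem~\ref{maintheorem}, Conjecture~\ref{littleTal} is now \emph{equivalent} to Conjecture~\ref{CKK}, and lists proving it as the first open problem in Section~\ref{Concluding}. Your alternative suggestion---re-running the iterative sparsification directly on integer covers---is not available as stated either: the entire reason the paper works with $q_f$ is that LP duality (Proposition~\ref{Tal6.7}) converts the fractional hypothesis into the existence of a spread measure supported on $\f$, which is the object the ALWZ-style iteration actually operates on; the integer quantity $q$ admits no such dual reformulation, which is exactly why the paper proves the fractional statement and leaves Conjecture~\ref{CKK} open.
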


We should emphasize how strong this is
(from~\citep{KK}:  ``It would probably be more sensible to conjecture that it is \emph{not} true'').
For example, it easily implies---and was largely motivated by---Erd\H{o}s--R\'enyi thresholds for 
(a) perfect matchings in random $r$-uniform hypergraphs, 
and (b) appearance of a given bounded degree spanning tree in a random graph.
These have since been resolved:  the first---\emph{Shamir's Problem}, circa 1980---in~\citep{JKV},
and the second---a mid-90's suggestion of the second author---in~\citep{Montgomery}.
Both arguments are difficult and specific to the problems they address (e.g.\ they are 
utterly unrelated either to each other or to what we do here). 
See Section~\ref{Applications} for more on these and other consequences.

Talagrand~\citep{Talagrand4, Talagrand} suggests relaxing ``$p$-small'' by replacing 
the set system 
$\g$ above by what we may think of as a \emph{fractional} set system, $g$: say
$\f$ is {\em weakly $p$-small} if there is a $g:2^X\ra \mathbb{R}^+$
such that
\[
\mbox{$\sum_{S\sub T}g(S)\geq 1 ~~\forall T\in \f~~$ and
$~~\sum_{S \sub X}g(S)p^{|S|} \leq 1/2.$}
\]
Then
$
q_f(\f):= \max\{\mbox{$p$ : $\f$ is weakly $p$-small}\} $, the \emph{fractional expectation-threshold} of $\f$,
satisfies
\beq{triv.l.b.*}
q(\f)\leq q_f(\f)\leq p_c(\f)
\enq
(the first inequality is trivial and the second is similar to~\eqref{triv.l.b.}), 
and Talagrand~\citep[Conjectures~8.3 and 8.5]{Talagrand} proposes a sort of LP relaxation of Conjecture~\ref{CKK}, 
and then a strengthening thereof.
The first of these, the following, 
replaces $q$ by $q_f$ in Conjecture~\ref{CKK}; the second, which
adds replacement of $|X|$ by the smaller $\ell(\f)$, is our Theorem~\ref{maintheorem}.
\begin{conjecture}\label{CT}
There is a universal $K$ such that for every finite $X$ and increasing
$\f\sub 2^X$,
\[   
p_c(\f) \le Kq_f(\f)\log |X|.
\]   
\end{conjecture}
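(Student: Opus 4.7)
The statement is a direct consequence of Theorem~\ref{maintheorem}, since $\ell(\f)\le|X|$, so I'll sketch the paper's approach to the stronger Theorem~\ref{maintheorem}. Fix $p = K q_f(\f) \log \ell(\f)$ for a large absolute constant $K$; the goal is to show $\mu_p(\f) \ge 1/2$. We may replace $\f$ by its antichain of minimal elements, so that every $T \in \f$ has $|T| \le \ell := \ell(\f)$.

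The first move is to reformulate the hypothesis via LP duality. ``$\f$ is weakly $p$-small'' is a feasibility condition on nonnegative weights $g(S)$, with constraints $\sum_{S\subseteq T} g(S) \ge 1$ for each $T \in \f$ and $\sum_S g(S) p^{|S|} \le 1/2$. Applying minimax/Farkas to this LP shows that whenever $p' > q_f(\f)$ there is a probability distribution $\mu$ on $\f$ such that
\[
\mu(\{T\in\f:T\supseteq S\}) \le 2(p')^{|S|} \quad \text{for every } S \sub X.
\]
Setting $r = 1/(2p')$, this says $\f$ is $r$-\emph{spread} with respect to $\mu$: each set $S$ lies above at most a $(1/r)^{|S|}$ fraction of the $\mu$-mass.

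The heart of the proof is then a \textbf{spread implies cover} lemma: \emph{if $\f$ is $r$-spread under some probability measure and every $T \in \f$ has $|T| \le \ell$, then for $p \ge C(\log \ell)/r$ a $\mu_p$-random $W$ contains some $T \in \f$ with probability at least $1/2$.} Given this, choosing $p' \asymp q_f(\f)$ and $p = K q_f(\f)\log\ell$ with $K$ large enough verifies the lemma's hypothesis $p \ge C(\log\ell)/r$, whence $\mu_p(\f) \ge 1/2$ and Theorem~\ref{maintheorem} follows.

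The spread-to-cover lemma is the principal obstacle, and I would prove it by adapting the iterative refinement strategy of Alweiss--Lovett--Wu--Zhang from their sunflower work. Reveal $W$ in $t = \Theta(\log \ell)$ independent rounds $W_1,\ldots,W_t$ of density $p/t$ each, and maintain inductively a ``partial witness'' $U_i \sub W_1 \cup \cdots \cup W_i$ such that the residual family $\f_i = \{T \sm U_i : U_i \sub T \in \f\}$ is still $\Omega(r)$-spread under the conditional measure and has maximum element size at most $\ell/2^i$. The inductive step is a single-round cover claim: for an $r$-spread family of sets of size $\le L$, one density-$\Omega(1/r)$ reveal either already covers some member of the family or exposes a small set $U$ that can be absorbed into the witness, leaving a residual family still $\Omega(r)$-spread but with maximum element size $\le L/2$. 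After $O(\log \ell)$ rounds the maximum element size drops below $1$, so $\0$ lies in some $\f_i$---equivalently, $W$ covers some $T \in \f$. Executing this induction with the right quantitative tradeoff between spreadness loss (a constant factor per round) and size reduction (a factor of $2$ per round), and verifying the single-round claim via a Markov/union-bound argument built on spreadness, is the technical heart of the paper.
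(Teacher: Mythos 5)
Your one-line derivation of Conjecture~\ref{CT} from Theorem~\ref{maintheorem} (via $\ell(\f)\le|X|$) is exactly right, and your outline of Theorem~\ref{maintheorem} --- LP duality to extract an $O(q_f)$-spread probability measure supported on $\f$ (Proposition~\ref{Tal6.7}), followed by an ALWZ-style iterative fragmentation (Theorem~\ref{MT}) --- is the paper's route. Two points in your sketch of the ``spread implies cover'' lemma, however, diverge from an argument that actually works. First, the residual family is \emph{not} $\{T\sm U_i: U_i\sub T\in\f\}$ for a single global witness $U_i$: restricting to the sets above a fixed $U_i$ keeps only about an $r^{-|U_i|}$ fraction of the mass, and there is no reason for that sub-family to remain spread. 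What the paper does instead is replace \emph{each} $S$ in the current family $\h_{i-1}$ by $\chi_i(S,W_i)=\psi(S\cup W_i)\sm W_i$, where $\psi$ selects some member of $\h_{i-1}$ contained in $S\cup W_i$, so that different members are ``rescued'' by leftovers of possibly different members. Spreadness survives because $I\sub\chi_i(S,W_i)$ forces $I\sub S$, so links do not grow, while Lemma~\ref{ML} guarantees that all but a $\gd$-fraction of the $S$'s yield a leftover of size at most $(1-\gc)|S|$, keeping $|\h_i|\geq|\h|/2$ throughout. Second, you cannot iterate ``until the maximum element size drops below $1$'': the per-round failure bound in Lemma~\ref{ML} is $C^{-r/3}$, which degenerates once the uniformity $r$ is bounded (the lemma requires $r\ge C_0^2$). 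The paper therefore stops the iteration at uniformity about $\sqrt{\log\ell}$ and finishes with a separate Janson-inequality step (Lemma~\ref{LJ} and Corollary~\ref{CJ}) using one final reveal of density $\log\ell/\kappa$. With those two corrections your outline matches the paper's proof.
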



Talagrand further suggests the following ``very nice problem of combinatorics,'' which implies  
\emph{equivalence of} Conjectures~\ref{CKK} and~\ref{CT}, as well as of 
Theorem~\ref{maintheorem} and the corresponding strengthening of Conjecture~\ref{CKK}.
\begin{conjecture}\label{littleTal}
There is a universal $K $ such that, for any increasing $\f$ on a finite set $X$,
$
q(\f)\geq q_f(\f)/K.
$
\end{conjecture}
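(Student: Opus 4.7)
The plan is to prove this via randomized rounding of the fractional cover, combined with a sunflower-type structural reduction to defeat the union bound. Let $g:2^X\to\RR^+$ witness that $\f$ is weakly $p$-small, so $\sum_{S\sub T}g(S)\ge 1$ for every $T\in\f$ and $\sum_S g(S)p^{|S|}\le 1/2$. I would seek integer $\g$ with $\f\sub\langle\g\rangle$ and $\sum_{S\in\g}(p/K)^{|S|}\le 1/2$ by including each $S$ in $\g$ independently with probability $\pi(S):=\min\{1,cg(S)\}$ for a small absolute constant $c$. The cost is immediate: for $K\ge 1$,
\[
\E\Big[\sum_{S\in\g}(p/K)^{|S|}\Big]\le c\sum_S g(S)p^{|S|}\le c/2,
\]
and coverage of any single $T\in\f$ holds with probability at least $1-e^{-c}$, since $\sum_{S\sub T}\pi(S)\ge\sum_{S\sub T}g(S)\ge 1$. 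A tempting deterministic variant $\g:=\{S:g(S)\ge K^{-|S|}\}$ satisfies the cost bound automatically but fails coverage, since $\sum_{S\sub T}K^{-|S|}=(1+1/K)^{|T|}\ge 1$ offers no contradiction when $|T|$ is large; genuine randomness (or structure) is therefore unavoidable.

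The heart of the argument is to beat a naive union bound over $\f$, which would otherwise cost a $\log|\f|$ factor in $K$ and deliver only the weaker Conjecture~\ref{CT}. My plan is to invoke the Alweiss--Lovett--Wu--Zhang sunflower machinery that drives the paper's proof of Theorem~\ref{maintheorem}: (i)~pass to the antichain of minimal elements of $\f$; (ii)~when this antichain is too large for a direct union bound, extract a near-sunflower whose core $Y$ covers a positive fraction of the family at single-set cost $p^{|Y|}$, which is negligible thanks to a ``spreadness'' property supplied by the fractional certificate $g$; (iii)~iterate on the peeled subfamily, carrying the restriction of $g$ (after an absolute-constant rescaling) as the inductive certificate. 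At each stage a constant fraction of the remaining family is absorbed into a bounded-cost collection of cores, and the recursion terminates once the residual family is small enough that the randomized rounding above succeeds with positive probability under union bound.

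The main obstacle, I expect, is verifying that the fractional certificate genuinely survives each peeling step with only a constant-factor loss, so that the induction does not secretly accumulate the $\log|\f|$ we are trying to avoid. ALWZ is formulated for integer ``spread'' hypergraphs, and adapting the notion of spreadness to a fractional $g$ -- in a way compatible with peeling a sunflower core -- is the step I view as genuinely nontrivial. A parallel route I would pursue is the LP dual: weak $p$-smallness fails iff there exists $h\ge 0$ on $\f$ with $\sum_T h(T)>1/2$ and $\sum_{T\supseteq S}h(T)\le p^{|S|}$ for all $S$, and Conjecture~\ref{littleTal} amounts to showing that any such $h$ rescales to an obstruction against integer $(p/K)$-smallness. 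Applying ALWZ-style sampling directly to the supports of $h$ may bypass the rounding-side hurdles entirely, though I would not be surprised if closing the argument requires a genuinely new idea beyond what is already present in the paper.
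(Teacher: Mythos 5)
This statement is Conjecture~\ref{littleTal}, which the paper does \emph{not} prove; it is stated as an open problem (Section~\ref{Concluding}, item A) and is noted there to be equivalent to Conjecture~\ref{CKK}. So there is no proof in the paper to compare against, and your proposal does not close the gap either: it is a research plan whose decisive step is left unverified, and that step is exactly where the known difficulty sits.

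Concretely, the randomized rounding alone cannot work. With $\pi(S)=\min\{1,cg(S)\}$ and $c$ a small constant, a fixed $T\in\f$ is covered with probability at least $1-e^{-c}\approx c$, which is bounded away from $1$ only if $c$ is large; but you need \emph{every} $T\in\f$ covered simultaneously, and the only generic tool for that is a union bound over $\f$ (or its minimal elements), forcing $c\gtrsim\log|\f|$ and hence $K\gtrsim\log|\f|$. That recovers something in the spirit of Conjecture~\ref{CT} / Theorem~\ref{maintheorem}, not the constant-factor statement of Conjecture~\ref{littleTal}. Your proposed escape---peeling sunflower-like cores and carrying the restricted fractional certificate through the induction with only constant loss---is precisely the ``genuinely nontrivial'' step you flag, and it is not known how to do it: the ALWZ machinery as used in this paper converts a spread (fractional) certificate into a statement about random sets landing in $\langle\h\rangle$, not into an \emph{integer} cover $\g$ with small weight, and no mechanism in the paper (or, to date, elsewhere) produces such a $\g$. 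The deterministic thresholding variant you correctly discard, and the LP-dual reformulation you mention, both run into the same wall. In short: the approach as written would, at best, reprove a logarithmic bound; the constant-factor claim remains open, as the paper itself says.
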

\nin
(That is,
weakly $p$-small implies $(p/K)$-small.)

Note the interest here is in Conjecture~\ref{littleTal} for its own sake and as the most likely
route to Conjecture~\ref{CKK}; all applications 
of the latter that 
we're aware of follow 
just as easily from Theorem~\ref{maintheorem}.

\mn
\textbf{Spread hypergraphs and spread measures.}
In this paper a \emph{hypergraph} on the (\emph{vertex}) set $X$ is a collection $\h$ of subsets of $X$
(\emph{edges} of $\h$),
with \emph{repeats allowed}.   
For $S \sub X$, we use 
$\langle S\rangle $ for $ \{T\sub X:T\supseteq S\}$, and for a hypergraph $\h$ on $X$, we write
$\langle\h\rangle$ for $\cup_{S\in \h}\langle S\rangle$.
We say $\h$ is $\ell$-\emph{bounded} (resp.\ $\ell$-\emph{uniform} or an \emph{$\ell$-graph}) 
if each of its members 
has size at most (resp.\ exactly) $\ell$,
and $\kappa$-\emph{spread} if 
\beq{spread}
|\h\cap \langle S\rangle|\leq \kappa^{-|S|}|\h| ~~\forall  S\sub X.
\enq
(Note that edges are counted with multiplicities on both sides of~\eqref{spread}.)

A major advantage of the fractional versions (Conjecture~\ref{CT} and Theorem~\ref{maintheorem}) 
over Conjecture~\ref{CKK}---and the source of
the present relevance of~\citep{ALWZ}---is that they admit, 
via linear programming duality, reformulations in which the specification of $q_f(\f)$ 
gives a usable starting point. Following~\citep{Talagrand}, we say 
a probability measure $\nu$ on $2^X$ is $q$-\emph{spread}
if 
\[
\nu(\langle S\rangle)\leq q^{|S|} ~~\forall  S\sub X.
\]
Thus a hypergraph $\h$ is $\kappa$-spread iff uniform measure on $\h$ is 
$q$-spread with $q=\kappa^{-1}$.

As observed by Talagrand~\citep{Talagrand}, the following is an easy consequence of duality.
\begin{proposition}\label{Tal6.7}
For an increasing family $\f$ on $X$, if $q_f(\f)\leq q$, then there is a $(2q)$-spread probability measure on $2^X$ supported on $\f$. \qed
\end{proposition}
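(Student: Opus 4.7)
The plan is to recognize $q_f(\f)$ as encoding the value of a linear program, and then extract the desired measure by LP duality. For $p \in [0,1]$, set
\[
v(p) \;:=\; \min\Bigl\{\textstyle\sum_{S} g(S)\, p^{|S|} \;:\; g : 2^X \to \mathbb{R}^+,\ \textstyle\sum_{S \sub T} g(S) \geq 1 \ \forall T \in \f \Bigr\}.
\]
By definition, $\f$ is weakly $p$-small precisely when $v(p) \leq 1/2$, so the hypothesis $q_f(\f) \leq q$ translates to $v(p) > 1/2$ for every $p > q$. Since $v$ is upper semicontinuous in $p$ (an infimum of functions each continuous in $p$), this gives $v(q) \geq 1/2$.

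Next I would write down the dual of the LP defining $v(q)$: its variables are $\nu(T) \geq 0$ indexed by $T \in \f$, the constraints are $\sum_{T \in \f,\, T \supseteq S} \nu(T) \leq q^{|S|}$ for every $S \sub X$, and the objective is $\max \sum_T \nu(T)$. Rewriting the constraints as $\nu(\langle S \rangle) \leq q^{|S|}$, one sees that the dual is precisely asking for the largest total mass of a nonnegative measure on $\f$ that is ``$q$-spread modulo total mass.'' LP strong duality then furnishes such a measure $\nu$ with $\sum_T \nu(T) = v(q) \geq 1/2$.

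Finally I would normalize $\mu := \nu/\sum_T \nu(T)$ to a probability measure on $\f$. For $S = \emptyset$ the desired inequality $\mu(\langle S \rangle) \leq (2q)^{|S|}$ reads $1 \leq 1$. For $S \neq \emptyset$,
\[
\mu(\langle S \rangle) \;=\; \frac{\nu(\langle S \rangle)}{\sum_T \nu(T)} \;\leq\; \frac{q^{|S|}}{1/2} \;=\; 2\,q^{|S|} \;\leq\; 2^{|S|} q^{|S|} \;=\; (2q)^{|S|},
\]
where the last step uses $|S| \geq 1$. Thus $\mu$ is a $(2q)$-spread probability measure on $2^X$ supported on $\f$, as required. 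There is no serious obstacle beyond correctly identifying the LP and its dual; the factor of $2$ in ``$(2q)$-spread'' is precisely what absorbs the normalization by a total mass that is only guaranteed to exceed $1/2$, which in turn reflects the $1/2$ appearing in the definition of weakly $p$-small.
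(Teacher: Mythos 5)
Your proof is correct and is exactly the argument the paper intends: the proposition is stated there without proof as ``an easy consequence of duality'' (following Talagrand), and your write-up --- identifying weak $p$-smallness with the value $v(p)$ of the covering LP, passing to the dual to get a measure of total mass $v(q)\ge 1/2$ supported on $\f$ satisfying $\nu(\langle S\rangle)\le q^{|S|}$, and normalizing so that the factor $2\le 2^{|S|}$ for $S\ne\emptyset$ absorbs the loss --- is precisely that duality argument. The upper-semicontinuity step you use to pass from $v(p)>1/2$ for $p>q$ to $v(q)\ge 1/2$ is a genuine (if minor) point that needs saying, and you handle it correctly.
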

\nin
This allows us to reduce Theorem~\ref{maintheorem} to the following alternate (actually, equivalent)
statement.  In this paper \emph{with high probability} (w.h.p.) means with probability tending to 1 
as $\ell\ra\infty$.

\begin{theorem}\label{MT}
There is a universal $K$ such that for any $\ell$-bounded, $\kappa$-spread hypergraph $\h$ on $X$, a uniformly random $((K\kappa^{-1}\log \ell)|X|)$-element subset of $X$ belongs to $\langle\h\rangle$ w.h.p.
\end{theorem}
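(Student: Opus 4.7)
The plan is to follow the approach pioneered by Alweiss--Lovett--Wu--Zhang~\citep{ALWZ} in the sunflower context, adapted here to the covering problem. The strategy is an iterative \emph{refinement} procedure that roughly halves the edge-size parameter $\ell$ at each stage, at a cost of $\Theta(|X|/\kappa)$ random vertices per stage, so that $O(\log\ell)$ stages suffice. Concretely, set $p=K\kappa^{-1}\log\ell$ and let $W\sub X$ be uniformly random with $|W|=\lceil p|X|\rceil$; expose $W$ in $t=\lceil\log_2\ell\rceil$ disjoint chunks $W_1\dots W_t$, each of size $\Theta(|X|/\kappa)$. By a routine padding argument we may assume $\h$ is $\ell$-uniform.

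The key technical statement I would aim for is a \emph{refinement lemma}: there is an absolute constant $c$ such that if $\h$ is a $\kappa$-spread, $\ell$-uniform hypergraph on a set $Y$ and $V$ is a uniformly random subset of $Y$ of size $c|Y|/\kappa$, then with probability $1-o(1/\log\ell)$ there exist $T\sub V$ and $S^*\in\h$ with $T\sub S^*$ and $|T|\geq \ell/2$, and moreover the link
\[
\h_T:=\{S\sm T:S\in\h,\ T\sub S,\ (S\sm T)\cap V=\0\}
\]
is $(\kappa/2)$-spread and $(\ell/2)$-uniform as a hypergraph on $Y\sm V$. The proof would adapt the ALWZ encoding argument: if no valid $T$ existed, then for every candidate $T$ the spread condition for the putative link would have to fail somewhere, and each failure $|\h\cap\langle S\rangle|>\kappa^{-|S|}|\h|$ supplies a compressed description of a sizeable part of $V$ in terms of data lying outside $V$. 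Stringing these compressions together produces an encoding of $V$ using substantially fewer bits than $\log\binom{|Y|}{c|Y|/\kappa}$, which contradicts uniformity for $c$ taken sufficiently large.

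Iterating the refinement lemma gives the theorem. Applied to $(\h,X,W_1)$ it yields $T_1\sub W_1$ with $|T_1|\geq\ell/2$ and a $(\kappa/2)$-spread, $(\ell/2)$-uniform link $\h_1$ on $X\sm W_1$. Repeating on $(\h_1,X\sm W_1,W_2)$ uses $W_2$ to halve the edge size again, and so on. After $t=O(\log\ell)$ rounds the edge bound drops below $1$, so $\0$ is an edge of the surviving hypergraph, which translates back to an original edge $S\in\h$ contained in $T_1\cup\cdots\cup T_t\sub W$. A union bound over the $t$ rounds, each failing with probability $o(1/\log\ell)$, completes the argument.

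The refinement lemma is where essentially all the difficulty sits. Two features make it subtle. First, the spread condition in the link is required against the \emph{reduced} ground set $Y\sm V$, not $Y$ itself, so the encoding scheme has to be organized carefully to avoid paying for vertices already committed to $V$. Second, $V$ is uniform on $k$-subsets rather than a product-measure sample, so the negative-dependence and concentration tools have to be matched to the ALWZ-style entropy bookkeeping, and a small geometric loss in $\kappa$ per round seems unavoidable. The final constant $K$ in the theorem simply absorbs the sum of these geometric losses together with the implicit constants in the refinement lemma.
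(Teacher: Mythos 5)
Your high-level plan (iterate an ALWZ-style step $O(\log\ell)$ times, each step costing density $\Theta(1/\kappa)$, after a padding reduction to the uniform case) matches the paper's, but the refinement lemma you build the iteration on is false, and the way it fails points at the idea you are missing. Take $\h$ to consist of $\kappa^{\ell}$ pairwise disjoint $\ell$-sets, so $|X|=\ell\kappa^{\ell}$ and $\h$ is exactly $\kappa$-spread. A random $V$ of size $c|X|/\kappa$ does contain half of some edge $S^*$ w.h.p.\ (there are enough edges for this), but for any such $T\sub S^*$ the link $\h_T$ is the single nonempty set $S^*\sm T$, and a one-edge hypergraph with a nonempty edge is only $1$-spread, not $(\kappa/2)$-spread. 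The failure is structural, not a matter of constants: committing to a single $T$ and passing to its link discards all but (at most) a $\kappa^{-|T|}$ fraction of $\h$, and nothing forces that tiny residue to inherit any spread. The paper instead keeps essentially \emph{every} edge alive through the iteration: for each $S\in\h_{i-1}$ it records the shrunken set $\chi_i(S,W_i)=\psi(S\cup W_i)\sm W_i$, where $\psi(S\cup W_i)$ is some possibly \emph{different} edge of $\h_{i-1}$ contained in $S\cup W_i$; the main lemma (the encoding/counting argument, which is where your compression sketch belongs) shows that the expected number of $S$ for which this shrunken set exceeds $(1-\gc)$ of the previous size bound is a tiny fraction of $|\h_{i-1}|$. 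Since only an $o(1)$ fraction of edges is lost per round, $|\h_i|\geq|\h|/2$ throughout, and the spread of each $\h_i$ follows from that of $\h$ essentially for free---no per-$T$ spread statement is ever needed. Note also that for large $\kappa$ a fixed edge meets $V$ in only about $c\ell/\kappa\ll\ell/2$ vertices, so progress cannot come from covering a given $S$ itself; it must come from switching to another edge inside $S\cup V$, which is exactly what $\chi$ does.

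A second, independent gap is the endgame. You cannot run the halving ``until the edge bound drops below $1$'' with per-round failure probability $o(1/\log\ell)$ at density $c/\kappa$: once the surviving hypergraph is $u$-uniform with $u=O(1)$, the probability that a set of density $c/\kappa$ contains one of its edges can be bounded away from $1$ (e.g.\ $u=1$ with $\kappa$ distinct singleton edges gives success probability about $1-e^{-c}$). The paper therefore stops the iteration at uniformity about $\sqrt{\log\ell}$ and finishes with a separate Janson-inequality step using one final sample of the larger density $\log\ell/\kappa$; that final step is where the $\log\ell$ in the theorem is actually spent.
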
 
\nin
The easy reduction is given in Section~\ref{prelim}.

\nin
\textbf{Assignments.}
Our second main result provides upper bounds on
the minima of a large class of hypergraph-based stochastic processes,
somewhat in the spirit of~\citep{Talagrand3} (see also~\citep{Talagrand1, Talagrand2}),
saying that in ``smoother'' settings, the logarithmic corrections of
Conjecture~\ref{CT} and Theorem~\ref{maintheorem} are not needed. 

For a hypergraph $\h$ on $X$, 
let $\xi_x$ ($x\in X$) be independent random variables, each uniform from $[0,1]$,
and set 
\beq{xiH}
\xi_\h= \min_{S\in\h}\sum_{x\in S}\xi_x
\enq
and $Z_\h=\E [\xi_\h]$.

\begin{theorem}\label{TFS}
There is a universal $K $ such that for any $\ell$-bounded, $\kappa$-spread hypergraph $\h$, we have
$Z_\h \le K\ell/\kappa$, and
$\xi_\h \leq K\ell/\kappa$ w.h.p.
\end{theorem}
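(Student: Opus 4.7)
The natural strategy is to iterate the single-step ingredient behind Theorem~\ref{MT}, using a fresh ``layer'' of the randomness $(\xi_x)_{x \in X}$ at each stage so that the cost contributions form a convergent series; this trades the global $\log \ell$ factor in Theorem~\ref{MT} for an $O(1)$ sum.

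\emph{Setup and layers.} We may assume $\kappa \geq C_1 \log \ell$ for a suitable constant $C_1$ (otherwise $K\ell/\kappa \geq \ell \geq \xi_\h$ deterministically, since $\xi_x \le 1$). Fix $p = C_0/\kappa$ with $C_0$ large relative to the implicit constants in Theorem~\ref{MT}, set $J = \lceil \log_2 \ell \rceil$, and let $I_j = ((j-1)p, jp]$ for $j = 1, \dots, J$. Define $R_j := \{x \in X : \xi_x \in I_j\}$. The $R_j$ are disjoint, each is marginally $\mu_p$-distributed, and $x \in R_j$ forces $\xi_x \leq jp$.

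\emph{Iteration.} Starting from $S^{(0)} = \emptyset$ and residual hypergraph $\h^{(0)} = \h$, at stage $j$ we apply the single-step ingredient behind Theorem~\ref{MT} to the $\Omega(\kappa)$-spread, $(\ell/2^{j-1})$-bounded residual hypergraph $\h^{(j-1)}$ on $X \setminus S^{(j-1)}$, feeding in the layer $R_j$ as the input random set. With small failure probability this produces $T_j \in \h^{(j-1)}$ together with a cover $T_j' \subseteq T_j \cap R_j$ satisfying $|T_j \setminus T_j'| \leq |T_j|/2 \leq \ell/2^j$. Define $S^{(j)} := S^{(j-1)} \cup T_j'$; since all vertices newly adjoined at stage $j$ lie in $R_j$, each contributes at most $jp$ to $\sum_{x \in S}\xi_x$. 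After $J$ stages the residual size is less than one, so we have an edge $S \in \h$ with
\[
\sum_{x \in S} \xi_x \;\leq\; \sum_{j=1}^J |S^{(j)} \setminus S^{(j-1)}|\cdot jp \;\leq\; \sum_{j=1}^J \frac{\ell}{2^{j-1}} \cdot jp \;\leq\; 4 p \ell \;=\; O(\ell/\kappa),
\]
using $\sum_{j \geq 1} j\,2^{1-j} = 4$. A union bound over the $J$ stages yields $\xi_\h \leq K\ell/\kappa$ w.h.p.

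\emph{Main obstacle and expectation.} The chief obstacle is extracting from the proof of Theorem~\ref{MT} a single-step lemma in the form used above: it must accept an externally-given $\mu_p$-distributed random set as input (rather than requiring freshly-drawn randomness at each stage), and it must preserve $\Omega(\kappa)$-spreadness of the residual across iterations. The former point is immediate from independence of the $\xi_x$'s and disjointness of the intervals $I_j$: conditionally on $R_1,\dots,R_{j-1}$, the layer $R_j$ is marginally $\mu_{p'}$ with $p' = p/(1-(j-1)p) = (1+o(1))p$, and the $(1+o(1))$ factor can be absorbed into $C_0$. The latter is the standard residual-spread step of the sunflower-type arguments of~\citep{ALWZ}. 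For the expectation bound $Z_\h = O(\ell/\kappa)$, choosing $C_0$ sufficiently large drives the single-step failure probability at each stage to decay polynomially in $\ell$; combined with the deterministic $\xi_\h \leq \ell$, the contribution of the bad event to $\E[\xi_\h]$ is then negligible compared to $\ell/\kappa$.
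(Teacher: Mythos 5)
Your core mechanism is the same as the paper's: generate the successive random sets from the $\xi_x$'s themselves (the paper orders $X$ by the $\xi_x$'s and takes consecutive blocks; your interval layers $R_j$ are an equivalent device), so that a vertex adjoined at stage $j$ costs $O(jp)$, and then sum $\sum_j jp\cdot \ell 2^{-j}=O(p\ell)$ --- this is precisely the content of Propositions~\ref{Wobs}, \ref{PS} and \ref{FinalP}. The endgame, however, does not work as written. The single-step lemma you are invoking (Lemma~\ref{ML}) has failure probability roughly $C^{-r/3}$, where $r$ is the \emph{current} uniformity of the residual hypergraph; once the residual size has dropped to $O(1)$ this is a fixed constant (for the universal constant $C=C_0$), so running the halving step for all $J=\lceil\log_2\ell\rceil$ stages down to residual size $<1$ leaves a failure probability bounded away from $0$. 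That is not $o(1)$, so even the w.h.p.\ claim fails, let alone the tail decay needed for $Z_\h$. The paper stops the iteration at residual uniformity $\log\ell$, where the stagewise failure probabilities still sum to $\ell^{-\Omega(1)}$, and covers the last $\log\ell$ vertices by a Janson-inequality step (Corollary~\ref{CJ}) applied to one final, slightly wider layer of measure $q=\log C\log^2\ell/\kappa$; this layer's contribution to $\sum_{x\in S}\xi_x$ is only $O(q\log\ell)=O(\log C\log^3\ell/\kappa)$, which is still $O(\ell/\kappa)$. Some such second phase is indispensable.

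Two further gaps. First, the opening reduction is false: $K\ell/\kappa\ge\ell$ only when $\kappa\le K$, so the regime $K<\kappa\lesssim\log\ell$ is neither trivial nor reachable by your main argument (your $J\approx\log_2\ell$ layers of width $C_0/\kappa$ do not fit inside $[0,1]$ there). The paper treats $\kappa\le\log^3\ell$ by a separate, truncated iteration: stop after $m\approx\gamma^{-1}\log\kappa$ stages, when the residual edge has size at most $\ell/\kappa$, and pay the trivial cost $\xi_x\le 1$ for each remaining vertex. Second, for the expectation a failure probability that is merely polynomially small in $\ell$ does not suffice: the bad event costs up to $\ell$, and $\ell\cdot\ell^{-O(1)}=O(\ell/\kappa)$ fails whenever $\kappa$ is superpolynomial in $\ell$, which is entirely possible ($\kappa$ is bounded only by about $n/\ell$, and $n$ is unconstrained). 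This is why the paper proves the tail bound of Claim~\ref{CFS} uniformly in a parameter $C$ ranging up to about $\kappa/\log\ell$ and integrates the resulting tail, rather than using a single threshold plus the crude bound $\xi_\h\le\ell$ on the complement.
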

\nin
These bounds are often tight (again up to the value of $K$). The distribution of the $\xi_x$'s is not very important;
e.g.\ it's easy to see that 
the same statement holds if they are $\Exp(1)$ random variables, as in the next example.

Theorem~\ref{TFS} was motivated by work of 
Frieze and Sorkin~\citep{FS} on the ``axial'' version of the 
\emph{random d-dimensional assignment problem}. 
This asks (for fixed $d$ and large $n$) for estimation of 
\beq{minxi}
Z_{d}^A (n)= \E\left[\min\sum_{x\in S} \xi_x\right],
\enq
where the $\xi_x$'s ($x\in X:= [n]^d$) are independent $\textrm{Exp}(1)$ weights
and $S$ ranges over ``axial assignments,'' meaning $S\sub X$
meets each \emph{axis-parallel hyperplane} ($\{x\in X: x_i=a\}$ for some 
$i\in [d]$ and $a\in [n]$) exactly once.
For $d=2$ this is classical; see~\citep{FS} for its rather glorious history.
For $d=3$ the deterministic version was one of Karp's~\citep{Karp} original NP-complete 
problems.  Progress on the random version has been limited; see~\citep{FS} for a guide
to the literature.

\nin

Frieze and Sorkin show (regarding bounds; they are also interested in algorithms)
that for suitable $c_1>0$ and $ c_2$, 
\beq{FSP}
c_1 n^{-(d-2)}< Z^A_{d}(n) < c_2n^{-(d-2)}\log n.
\enq
(The lower bound is easy and the upper bound follows from the
Shamir bound of~\citep{JKV}.)

In present language, $Z_{d}^A (n)$ is essentially 
(that is, apart from the difference in the 
distributions of the $\xi_x$'s) $Z_\h$, with $\h$ the set of perfect matchings of
the complete, balanced $d$-uniform $d$-partite hypergraph on $dn$ vertices
(that is, the collection of $d$-sets meeting each of the pairwise disjoint $n$-sets
$V_1\dots V_d$).
This is easily seen to be $\kappa$-spread with $\kappa= (n/e)^{d-1}$ (apart from the nearly irrelevant $d$-particity, 
it is the $\h$ of Shamir's Problem), so the correct bound is an instance of 
Theorem~\ref{TFS}:
\begin{corollary}
\label{FSbd}
$Z_{d}^A (n) =\Theta (n^{-(d-2)})$.
\end{corollary}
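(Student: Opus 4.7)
The plan is to realize the axial random assignment problem as a direct instance of Theorem~\ref{TFS}. First, I would set up the dictionary: let $V_1\dots V_d$ be disjoint copies of $[n]$, let $K$ be the complete balanced $d$-partite $d$-uniform hypergraph on $V_1\cup\cdots\cup V_d$, set $X=E(K)$ (identified with $[n]^d$), and let $\h$ be the collection of perfect matchings of $K$. Each $S\in\h$ is a set of $n$ pairwise-disjoint $d$-tuples, so $\h$ is $n$-uniform and $\ell(\h)=n$. Under this identification, each axial assignment in \eqref{minxi} corresponds to an $S\in\h$, so $Z^A_d(n)$ coincides with $Z_\h$ up to the distribution of weights. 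The lower bound $Z^A_d(n)\ge c_1 n^{-(d-2)}$ is already provided by \eqref{FSP}, so only the matching upper bound is needed.

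The main computation is verifying that $\h$ is $\kappa$-spread with $\kappa=(n/e)^{d-1}$. For $T\sub X$, if $T$ is not a partial matching in $K$, then $|\h\cap\langle T\rangle|=0$ and \eqref{spread} is trivial. If $T$ is a partial matching of size $s$, then $T$ occupies $s$ vertices of each $V_i$, and the perfect matchings of $K$ extending $T$ correspond bijectively to perfect matchings of the complete $d$-partite hypergraph on the remaining parts of size $n-s$, of which there are $((n-s)!)^{d-1}$. Since $|\h|=(n!)^{d-1}$,
\[
\frac{|\h\cap\langle T\rangle|}{|\h|}=\frac{1}{\bigl(n(n-1)\cdots(n-s+1)\bigr)^{d-1}}.
\]
Combining the elementary estimates $\binom{n}{s}\ge (n/s)^s$ and $s!\ge (s/e)^s$ gives $n(n-1)\cdots(n-s+1)\ge (n/e)^s$, so the ratio is at most $(e/n)^{s(d-1)}=\kappa^{-s}$, as required.

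With spread established, Theorem~\ref{TFS} directly yields
\[
Z_\h\le K\ell(\h)/\kappa=Kn/(n/e)^{d-1}=Ke^{d-1}\,n^{-(d-2)}=O(n^{-(d-2)}),
\]
which, combined with the lower bound from \eqref{FSP}, gives $Z^A_d(n)=\Theta(n^{-(d-2)})$. The only real wrinkle is that Theorem~\ref{TFS} is stated for $\xi_x$ uniform on $[0,1]$, whereas $Z^A_d(n)$ uses $\Exp(1)$ weights; but the authors explicitly flag this after Theorem~\ref{TFS}, and the extension is routine since the two densities agree up to a bounded factor near the origin, which is all that matters for the minimum. Thus the main substantive step is the spread count above, and everything else is a one-line invocation of Theorem~\ref{TFS}.
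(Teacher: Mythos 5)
Your proposal is correct and follows exactly the route the paper intends: identify $\h$ as the perfect matchings of the complete balanced $d$-partite $d$-graph, verify the $(n/e)^{d-1}$-spread (which the paper merely asserts as "easily seen" and you compute correctly via $n(n-1)\cdots(n-s+1)\ge (n/e)^s$), invoke Theorem~\ref{TFS} for the upper bound, and quote the lower bound from \eqref{FSP}. The handling of the $\Exp(1)$ versus uniform weights matches the paper's own (deliberately brief) treatment of that point.
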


Frieze and Sorkin also considered the ``planar'' version of the problem, in which $S$ in~\eqref{minxi}
meets each \emph{line} ($\{x\in X: x_j=y_j~\forall j\neq i\}$ for some 
$i\in [d]$ and $y\in X$) exactly once; and one may of course 
generalise from hyperplanes/lines to $k$-dimensional ``subspaces''
for a given $k\in [d-1]$. It's easy to see what to expect here, and
one may hope Theorem~\ref{TFS} will eventually apply, but we at present lack the technology to say the relevant $\h$'s are suitably spread (see Section~\ref{Concluding}).

\mn
\textbf{Organisation.} 
Section~\ref{prelim} includes minor preliminaries 
and the
derivation of Theorem~\ref{maintheorem} from Theorem~\ref{MT}.
The heart of our argument, Lemma~\ref{ML}, is proved in Section~\ref{SML};
our approach here strengthens that of
the recent breakthrough of Alweiss, Lovett, Wu and Zhang~\citep{ALWZ} on the Erd\H{o}s--Rado
``Sunflower Conjecture''~\citep{ErdosRado}.
Section~\ref{SJ} adds one small technical point (more or less repeated from~\citep{ALWZ}), and
the proofs of Theorems~\ref{MT} and~\ref{TFS} are given in Sections~\ref{SMT} and~\ref{SFS}. 
Finally, Section~\ref{Applications} outlines a few applications and
Section~\ref{Concluding} discusses unresolved questions.

\section{Little things}\label{prelim}

\nin
\textbf{Usage.}
As is usual, we use $[n] $ for $ \{1, 2, \dots, n\}$, $2^X$ for the power set of $X$, $\binom{X}{r}$ for the family of $r$-element subsets of $X$, and 
$[S,T]$ for $\{R:S\sub R\sub T\}$. Our default universe is $X$, with $|X|=n$. 

In what follows we assume $\ell$ and $n$ are somewhat large 
(when there is an $\ell$ it will be at most $n$),
as we may do since smaller values can by handled by adjusting the $K$'s in Theorems~\ref{MT} and~\ref{TFS}.
Asymptotic notation referring to some parameter $\lambda$ (usually $\ell$) is used in the natural way:  
implied constants in $O(\cdot)$ and $\gO(\cdot)$ are independent of $\lambda$, and 
$f=o(g)$ (also written $f\ll g$) means $f/g$ is smaller than any given $\eps>0$ for large enough values of 
$\lambda$.
Following a standard abuse, we usually pretend large numbers are integers.

For $p \in [0,1]$ and $m\in [n]$, $X_p$ and $X_m$ are (respectively) a $p$-random subset of $X$ 
(drawn from $\mu_p$) and a uniformly random $m$-element subset of $X$.
The latter is not entirely kosher, since we will also see \emph{sequences} $X_i$; but we will never see 
both interpretations in close proximity, and the overlap should cause no confusion.

\mn

In a couple places it will be helpful to assume uniformity, which we will justify using the next little point. 

\begin{obs}\label{Ounif}
If $\h$ is $\ell$-bounded and $\kappa$-spread, and
we replace each $S\in \h$ by $M$ new edges, each consisting of $S$ plus
$\ell-|S|$ new vertices (each used just once), then for large enough M
the resulting $\ell$-graph $\g$ is again $\kappa$-spread.
\end{obs}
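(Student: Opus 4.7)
The plan is a direct verification of the spread condition, splitting cases according to whether the test set uses any of the padding vertices. Let $Y$ denote the set of all newly introduced vertices, so that $\g$ is a hypergraph on $X\cup Y$, $|\g|=M|\h|$, and, crucially, each $y\in Y$ belongs to exactly one edge of $\g$ (since the padding is fresh for every copy). Given an arbitrary $S'\sub X\cup Y$, I would decompose $S'=S\sqcup T$ with $S\sub X$ and $T\sub Y$, and check the required bound $|\g\cap\langle S'\rangle|\leq \kappa^{-|S'|}|\g|$ in each case.

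When $T=\0$: an edge $S_0\cup T_{S_0,i}$ of $\g$ contains $S$ iff $S_0\supseteq S$, so
\[
|\g\cap\langle S\rangle| \;=\; M\,|\h\cap\langle S\rangle| \;\leq\; M\kappa^{-|S|}|\h| \;=\; \kappa^{-|S'|}|\g|,
\]
where the inequality is exactly the $\kappa$-spread of $\h$. This case holds for \emph{every} $M$.

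When $T\neq\0$: pick any $y\in T$; since $y$ lies in a unique edge of $\g$, we have $|\g\cap\langle S'\rangle|\leq 1$. We may assume $|S'|\leq \ell$ (otherwise the left side is $0$ for free), so it suffices to guarantee $M|\h|\geq \kappa^{\ell}\geq \kappa^{|S'|}$, which clearly holds once $M\geq \kappa^{\ell}/|\h|$.

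Taking $M$ to be any integer at least $\kappa^{\ell}/|\h|$ therefore produces an $\ell$-uniform $\kappa$-spread hypergraph, as required. I do not expect any real obstacle: the whole argument hinges on the single combinatorial observation that fresh padding vertices each appear in only one edge, which collapses the spread condition to the trivial bound $1$ in the only case where the original spread hypothesis on $\h$ does not directly apply.
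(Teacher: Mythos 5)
Your argument is correct, and since the paper states Observation~\ref{Ounif} without proof, your case split (padding vertices lie in a unique edge, so the spread bound reduces to $M|\h|\geq\kappa^{\ell}$; test sets inside $X$ inherit the bound directly from $\h$) is exactly the intended routine verification. No issues.
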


\begin{proof}[Derivation of Theorem~\ref{maintheorem} from Theorem~\ref{MT}]
Let $\f$ be as in Theorem~\ref{maintheorem} with $\g$ its set of minimal elements, let $\ell $ with 
$\ell(\f) \leq \ell =O(\ell(\f))$
be large enough that the exceptional probability in Theorem~\ref{MT} is less than 1/4
and let $\nu$ be the $(2q)$-spread probability measure promised by 
Proposition~\ref{Tal6.7}, where $q = q_f(\f)$.  We may assume $\nu$ is supported on $\g$
(since transferring weight from $S$ to $T\sub S$ doesn't destroy the spread condition)
and that $\nu$ takes values in $\QQ$. 
We may then replace $\g$ by $\h$ whose edges are copies of edges of $\g$, and $\nu$ by 
uniform measure on $\h$.

Setting $m= ((2Kq\log \ell)n)$ and $p=2m/n$
(with $n=|X|$ and $K$ as in Theorem~\ref{MT}), we then have (using 
Theorem~\ref{MT} with $\kappa = 1/(2q)$)
\[
\mu_p(\f)\geq  \pr (X_p\in\langle\h\rangle)\geq \pr(|X_p|\geq m)\pr (X_m\in\langle\h\rangle) 
\geq 3\pr(|X_p|\geq m)/4 > 1/2,
\]
implying $p_c(\f)< p=4Kq\log \ell$.
(Note $\h$ $q$-spread with $\0\not\in \h$ implies $q\geq 1/n$, so that
$m$ is somewhat large and $\pr(|X_p|\geq m)\approx 1$.)
\end{proof}
\begin{Remark}\label{Rstupid}
This was done fussily to cover smaller $\ell$ in Theorem~\ref{maintheorem};
if $\ell\ra\infty$, then it gives $\pr (X_p\in\langle\h\rangle)\ra 1$.
\end{Remark}

\section{Main Lemma}\label{SML}

Let $\gc$ be a slightly small 
constant (e.g.\ $\gc =0.1$ suffices), and let $C_0$ be a 
constant large enough to support 
the estimates that follow. Let $\h$ be an $r$-bounded, $\kappa$-spread hypergraph on a set $X$ of size $n$, with $r,\kappa\ge C_0^2$. Set $p=C/\kappa$ with $C_0 \le C \le \kappa/ C_0$ (so $p \leq 1/C_0$), $r'=(1-\gc)r$ and $N=\C{n}{np}$. Finally, fix $\psi:\langle\h\rangle\ra \h$ satisfying $\psi(Z)\sub Z$ for all $Z \in \langle \h \rangle$;
set, for $W\sub X$ and $S\in \h$,
\[
\chi(S,W) = \psi(S\cup W)\sm W;
\]
and say the pair $(S,W)$ is \emph{bad} if $|\chi(S,W)|>r'$ and \emph{good} otherwise.

The heart of our argument is the following lemma (an improvement of~\citep[Lemma~5.7]{ALWZ}),
regarding which a little orientation
may be helpful.
We will (in Theorems~\ref{MT} and \ref{TFS}) be choosing a random subset of $X$ 
in small increments
and would like to say we 
are likely to be making good progress toward containing some $S\in\h$.
Of course such progress is not to be expected
for a \emph{typical} $S$, but this is not the goal:
having chosen a portion $W$ of our eventual set, we just need the 
remainder to contain \emph{some} $S\sm W$, and may focus on those that are more 
likely (basically meaning small).
The key idea (introduced in~\citep{ALWZ} and refined here) is that 
a general $S\sm W$, while not itself small, will,
in consequence of the spread assumption, typically \emph{contain} some small $S'\sm W$.
(In fact $\chi(S,W)$ will usually be one of these: 
an $S'\sm W$ contained in $S\sm W$ will \emph{typically} be small, so we don't need to
steer this choice.)
We then replace each ``good'' $S\sm W$ by $\chi(S,W)$ and iterate, a second nice feature
of the spread condition being that it is not much affected by this substitution.

\begin{lemma}\label{ML}
For $\h$ as above, and $W$ chosen uniformly from $\C{X}{np}$,
\[   
\E [|\{S\in \h:\mbox{$(S,W)$ is bad}\}|] \leq |\h|C^{-r/3}.
\]   
\end{lemma}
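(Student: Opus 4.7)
The plan is a union bound over the witness $T:=\psi(S\cup W)\in\h$, combined with two applications of the spread condition. First I unpack the bad event: if $(S,W)$ is bad with witness $T$, then $T\in\h$, $T\sm W\sub S$ (equivalently $T\sub S\cup W$), and $|T\sm W|>r'$; in particular $|T\cap W|<\gc r$ (using $|T|\le r$) and $|T\cap S|>r'$.

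Summing $\pr_W[(S,W)\text{ bad}]$ over $S\in\h$ and then union-bounding over $T\in\h$,
\[
\E\bigl[|\{S\in\h : (S,W)\text{ bad}\}|\bigr] \ \le\ \sum_{T\in\h}\E_W\bigl[\one[|T\sm W|>r']\cdot|\{S\in\h : S\supseteq T\sm W\}|\bigr].
\]
The first use of spread bounds the inner count by $\kappa^{-|T\sm W|}|\h|$. Writing $|T\sm W|=|T|-|T\cap W|$, pulling out $\kappa^{-r}$ (since $|T|\le r$), and swapping the sum with the expectation,
\[
\E[\text{bad}] \ \le\ |\h|\kappa^{-r}\,\E_W\Bigl[\sum_{T\in\h}\kappa^{|T\cap W|}\one[|T\cap W|<\gc r]\Bigr].
\]
I would then use spread in the opposite direction: over $\ell$-subsets $B\sub W$, $|\{T\in\h:T\supseteq B\}|\le\kappa^{-\ell}|\h|$, so $|\{T\in\h : |T\cap W|\ge\ell\}|\le\binom{np}{\ell}\kappa^{-\ell}|\h|$, whence
\[
\sum_{T\in\h}\kappa^{|T\cap W|}\one[|T\cap W|<\gc r] \ \le\ |\h|\sum_{\ell<\gc r}\binom{np}{\ell}.
\]
Since $\kappa\le n/r$ (forced by $r$-uniformity and $\kappa$-spreadness, to which we may reduce via Observation~\ref{Ounif}), $np\le Cr$ and this sum is controlled by its largest term $\binom{Cr}{\gc r}\le(eC/\gc)^{\gc r}$.

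The main obstacle is a spurious factor of $|\h|$: the naive bound so obtained is $|\h|^2\kappa^{-r}(eC/\gc)^{\gc r}$, whereas the target is $|\h|\cdot C^{-r/3}$, and since $|\h|\ge\kappa^r$ by spread (take $S=T\in\h$ in \eqref{spread}), concentration on $|T\cap W|$ alone cannot repair this slack. The refinement of~\citep[Lemma~5.7]{ALWZ} hinted at in the orientation paragraph resolves it by reorganising the count through the joint object $Z=S\cup W$: because $T=\psi(Z)$ depends only on $Z$, all $S$'s with $S\cup W=Z$ share the same witness, and the contribution of each $Z$ is bounded by $|\h\cap[Z\sm W,Z]|\le\kappa^{-|Z\sm W|}|\h|$. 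This absorbs one copy of $|\h|$; the exponential savings $C^{-\Theta(r)}$ then come from the concentration of $|T\cap W|$ about its small mean $rp=rC/\kappa\ll\gc r$, and balancing with $\gc=0.1$, $C\in[C_0,\kappa/C_0]$, $r,\kappa\ge C_0^2$ pushes the exponent to at least $r/3$.
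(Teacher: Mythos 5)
Your diagnosis of the first attempt is correct: the union bound over witnesses $T$ overcounts by a factor of $|\h|$, and since $|\h|\ge\kappa^r$ no concentration of $|T\cap W|$ can recover this. (As a side note, two of your intermediate claims there are also backwards: $|T|\le r$ gives $\kappa^{-|T|}\ge\kappa^{-r}$, not $\le$, and $r$-uniformity forces $\kappa\le n/r$, hence $np=Cn/\kappa\ge Cr$ rather than $\le Cr$.) The real issue is the repair you then gesture at, which is where the content of the lemma lives. First, the encoding is garbled: after fixing $Z=S\cup W$ you bound the candidates for $S$ by $|\h\cap[Z\sm W,Z]|$, but at that stage of the enumeration $W$ is not yet determined (it is reconstructed only at the end from $Z\sm S$ and $W\cap S$), so the interval $[Z\sm W,Z]$ is not available; the usable constraint is $S\in[T,Z]$ with $T=S\cap\psi(Z)$ and $t:=|T|>r'$. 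Second, and decisively, even the correctly executed version of this encoding --- which is exactly ALWZ's Lemma 5.7: at most $Np^{-s}$ choices of $Z$, at most $2^r$ choices of $T$, at most $\kappa^{-t}|\h|$ choices of $S\in[T,Z]$ by spread, and at most $2^s$ choices of $W\cap S$ --- gives only $\E[\#\mathrm{bad}]\le 4^rp^{-s}\kappa^{-t}|\h|=4^rC^{-t}(\kappa/C)^{s-t}|\h|$. The excess factor $(\kappa/C)^{s-t}$, with $s-t$ as large as $\gc r$, is not bounded by any power of $C$ (the hypotheses allow $\kappa$ arbitrarily large relative to $C$), so this does not yield $|\h|C^{-r/3}$. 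Your closing appeal to ``concentration of $|T\cap W|$'' cannot supply the missing savings; that is the mechanism you already showed to be insufficient.

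The missing idea is the pathological/nonpathological dichotomy, which the paper explicitly identifies as its improvement over ALWZ. Call $(S,Z)$ pathological if some $T\sub S$ with $t=|T|>r'$ satisfies $|\h_s\cap[T,Z]|>B^r|\h|\kappa^{-t}p^{s-t}$, where $B=\sqrt C$. For nonpathological pairs, the third step of the encoding improves from $\kappa^{-t}|\h|$ to $B^r|\h|\kappa^{-t}p^{s-t}$; the factor $p^{s-t}$ exactly cancels the offending $p^{-(s-t)}$ and leaves a bound of order $N|\h|(4B)^rC^{-t}\le N|\h|\bigl(4BC^{-(1-\gc)}\bigr)^r$, which beats the target. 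Pathological pairs then need a separate count: one fixes $S$, $T$, and a suitable $U\in[T,S]$, and shows --- via the spread condition together with Markov's inequality applied to a uniformly random candidate for $Z\sm S$ --- that at most $N(2/B)^r$ sets $Z\sm S$ can realize the pathology, giving a contribution of at most $|\h|N(16/B)^r$. Neither half of this argument appears in your proposal.
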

\begin{proof}
It is enough to show, for $s\in (r',r]$,
\beq{Expectation1'}
\E \left[|\{S\in \h:\mbox{$(S,W)$ is bad and $|S|=s$}\}| \right] \leq (\gc r)^{-1}|\h|C^{-r/3},
\enq
or, equivalently, that
\beq{Expectation1}
|\{(S,W):\mbox{$(S,W)$ is bad and $|S|=s$}\}| \leq (\gc r)^{-1}N|\h|C^{-r/3}.
\enq
(Note $\gc r=r-r'$ bounds the number of $s$ for which the set in question can be nonempty,
whence the negligible factors $(\gc r)^{-1}$.)

We now use $\h_s =\{S\in \h:|S|=s\}$.
Let $B=\sqrt{C}$ and for $Z\supseteq S\in \h_s$
say $(S,Z)$ is \emph{pathological} if there 
is $T\sub S$ with $t:=|T|>r'$ and
\beq{patho}
|\{S'\in \h_s:S'\in [T,Z]\}| > B^r|\h|\kappa^{-t}p^{s-t}.
\enq
From now on we will always take $Z=W\cup S$ (with $W$ as in Lemma~\ref{ML});
thus $|Z|$ is typically roughly $np$ and, since $\h$ is $\kappa$-spread, 
$|\h|\kappa^{-t} p^{s-t}$ is a natural upper bound on what one might expect for the l.h.s.\ of 
\eqref{patho}.  

Note that in proving \eqref{Expectation1} we may assume $s\leq n/2$:  
we may of course assume $|\h_s|$ is at least the r.h.s.\ of \eqref{Expectation1'}; but then 
for an $S\in \h_s$ of largest multiplicity, say $m$, we have
\[
m\leq \kappa^{-s}|\h|\leq \kappa^{-s}\gc r C^{r/3}|\h_s|
\leq \kappa^{-s}\gc r C^{r/3}m2^n,
\]
which is less than $m$ if $s>n/2$ (since $\kappa>C$).

We bound the nonpathological and pathological parts of~\eqref{Expectation1} separately;
this (with the introduction of ``pathological'') is the source of our improvement over~\citep{ALWZ}.

\mn
\textbf{Nonpathological contributions.}
We first bound the number of $(S,W)$ in~\eqref{Expectation1} with
$(S,Z)$ nonpathological.
This basically follows~\citep{ALWZ}, but ``nonpathological''  allows us to bound the number of
possibilities in Step 3 below 
by the r.h.s.\ of~\eqref{patho}, where~\citep{ALWZ} settles for something like $|\h|\kappa^{-t}$.

\mn
\emph{Step 1.}
There are at most 
\beq{Nbnd} 
\sum_{i =0}^s \binom{n}{np + i} \le \binom {n+s}{np+s} \le N p^{-s}
\enq
choices for
 $Z= W\cup S$.

\mn
\emph{Step 2.}
Given $Z$, let $S'= \psi(Z)$. Choose $T:=S\cap S'$,
for which there are at most $2^{|S'|}\leq 2^r$ possibilities, and set $t=|T|>r'$.
(If $t\leq r'$ then $(S,W)$ cannot be bad, as $\chi(S,W) = S'\sm W\sub T$.)

\mn
\emph{Step 3.}
Since we are only interested in nonpathological choices, the number of possibilities for $S$ is now
at most 
\[B^r|\h|\kappa^{-t}p^{s-t}.\]

\mn
\emph{Step 4.}
Complete the specification of $(S,W)$ by choosing $W\cap S$, 
the number of possibilities for which is at most $2^s$.

\mn

In sum, since $s\leq r$ and $t>r'=(1-\gc)r$,
the number of nonpathological possibilities is at most 
\beq{T1}
2^{r+s}N|\h|B^r (p\kappa)^{-t}
\leq N |\h|(4B)^rC^{-t}   <  N|\h|[4B C^{-(1-\gc)}]^r.
\enq

\mn
\textbf{Pathological contributions.}
We next bound the number of $(S,W)$ as in~\eqref{Expectation1} with $(S,Z)$ pathological. 
The main point here is Step~4.

\mn
\emph{Step 1.}
There are at most $|\h|$ possibilities for $S$.

\mn
\emph{Step 2.}
Choose $T\sub S$ witnessing the pathology of $(S,Z)$ (i.e.\ for which~\eqref{patho} holds);
there are at most $2^s$ possibilities for $T$.

\mn
\emph{Step 3.}
Choose $U\in [T,S]$ for which 
\beq{pathoR}
|\h_s\cap [U,(Z\sm S)\cup U]| > 2^{-(s-t)}B^r|\h|\kappa^{-t}p^{s-t}.
\enq
(Here the left hand side counts members of $\h_s$ in $Z$ whose intersection with $S$ is precisely $U$. 
Of course, existence of $U$ as in~\eqref{pathoR} follows from~\eqref{patho}.) The number of possibilities for this choice is at most $2^{s-t}$.

\mn
\emph{Step 4.}  Choose $Z\sm S$, the number of choices for which is less than $N (2/B)^r$. To see this, write $\Phi$ for the r.h.s.\ of~\eqref{pathoR}.
Noting that $Z\sm S$ must belong to $\C{X\sm S}{np} \cup \C{X\sm S}{np-1} \cup \cdots \cup \C{X\sm S}{np-s}$,
we consider, for $Y$ drawn uniformly from this set, 
\beq{PpathoR}
\pr(|\h_s\cap [U,Y\cup U]|> \Phi) .
\enq
Set $|U|=u$.
We have
\[
|\h_s\cap \langle U\rangle|\leq |\h\cap \langle U\rangle| \leq |\h|\kappa^{-u},
\]
while, for any $S'\in \h_s\cap \langle U\rangle$,
\[
\pr(Y\supseteq S'\sm U)\leq \left(\frac{np}{n-s}\right)^{s-u} 
\]
(of course if $S'\cap S\neq U$ the probability is zero);
so 
\[
\vt:=\E\left[|\h_s\cap [U,Y\cup U]|\right]\leq |\h|\kappa^{-u}\left(\frac{np}{n-s}\right)^{s-u}\leq |\h|\kappa^{-u}\left(2p\right)^{s-u}
\]
(since $n-s \ge n/2$).
Markov's Inequality then bounds the probability in~\eqref{PpathoR} by $\vt/\Phi$, and this bounds the 
number of possibilities for $Z\sm S$ by 
$N(\vt/\Phi)$ (\emph{cf.}~\eqref{Nbnd}),
which is easily seen to be less than $N(2/B)^r$.

\mn
\emph{Step 5.}
Complete the specification of $(S,W)$ by choosing $S\cap W$, which can be done in at most $2^s$ ways.

\mn

Combining (and slightly simplifying), we find that the number of pathological possibilities is at most 
\beq{T2}
|\h|N (16/B)^r.
\enq

Finally, the sum of the bounds in~\eqref{T1} and~\eqref{T2} is less than the 
$(\gc r)^{-1}N|\h|C^{-r/3}$ of \eqref{Expectation1}.\end{proof}


\section{Small uniformities} \label{SJ}

As in~\citep[Lemma 5.9]{ALWZ}, very small set sizes are handled by a simple Janson bound:

\begin{lemma}\label{LJ}
For an $r$-bounded, $\kappa$-spread $\g$ 
on $Y$, 
and $\ga\in (0,1)$,
\beq{JB}
\pr(Y_\ga\not\in \langle\g\rangle) \leq 
\exp\left[- \left(\sum_{t=1}^r\C{r}{t}(\ga \kappa)^{-t}\right)^{-1}\right].
\enq
\end{lemma}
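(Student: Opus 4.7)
The plan is to use a Janson-type inequality on the upward-closed events $B_S := \{S \subseteq Y_\ga\}$ (for $S \in \g$), so that the desired probability is $\pr(\bigcap_{S \in \g}\overline{B_S})$. First I would invoke Observation~\ref{Ounif} to reduce, without loss of generality, to the case where $\g$ is $r$-uniform (by padding each edge of $\g$ with fresh dummy vertices, yielding an $r$-uniform hypergraph that is again $\kappa$-spread and for which non-containment lifts back to non-containment in the original $\g$).

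The core computation is a spread-based bound on the pairwise interaction sum $\bar\Delta := \sum_{S, S' \in \g}\pr(B_S \cap B_{S'}) = \sum_{S, S'}\ga^{|S \cup S'|}$. For fixed $S \in \g$, I would stratify the inner sum by $t := |S \cap S'|$ and apply the $\kappa$-spread condition: for each $t$-element subset $T \subseteq S$, at most $\kappa^{-t}|\g|$ edges $S' \in \g$ contain $T$, and $S$ has $\binom{r}{t}$ such subsets $T$. This gives
\[
\sum_{S' \in \g}\ga^{|S \cup S'|} \;\le\; |\g|\ga^{2r}\sum_{t=0}^{r}\binom{r}{t}(\ga\kappa)^{-t}.
\]
Summing over $S \in \g$ and writing $\mu := |\g|\ga^r = \sum_S \pr(B_S)$ yields $\bar\Delta \le \mu^2(1 + A)$, where $A := \sum_{t=1}^r \binom{r}{t}(\ga\kappa)^{-t}$ is the quantity in the denominator of~\eqref{JB}; equivalently, the off-diagonal contribution $\bar\Delta - \mu^2$ is at most $\mu^2 A$.

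Plugging this into the appropriate form of Janson's inequality (separating the diagonal $S = S'$ contribution, which is exactly $\mu$, from the off-diagonal $\Delta \le \mu^2 A$) then delivers the desired $\pr(Y_\ga \notin \langle \g \rangle) \le \exp(-1/A)$; the fact that $\mu \ge 1/A$, which follows from $|\g| \ge \kappa^r$ (applying the spread condition to a single edge of $\g$ gives $1 \le \kappa^{-r}|\g|$), ensures the Janson bound is non-trivial. I anticipate the main work to be the stratified spread estimate for $\bar\Delta$; matching the exact constant in the exponent is a routine calibration of the Janson machinery, and no genuinely new idea beyond the standard interplay of Janson with the spread condition should be required.
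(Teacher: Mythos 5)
Your proposal is correct and follows essentially the same route as the paper: reduce to the $r$-uniform case via Observation~\ref{Ounif}, bound the Janson interaction sum by stratifying over $t=|S\cap S'|$ via the spread condition, and conclude from the exponential form $\exp[-\mu^2/\bar\Delta]$ of Janson's inequality. One bookkeeping point: to land on $\exp(-1/A)$ rather than the slightly weaker $\exp(-1/(1+A))$, take $\bar\Delta$ to range only over pairs with $S\cap S'\neq\emptyset$ (as in the version of Janson the paper cites); the diagonal terms are then already absorbed into the $t=r$ stratum of your bound $\mu^2A$, so there is no need to add $\mu$ separately, and the $t=0$ term should be discarded.
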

\begin{proof}
We may assume $\g$ is $r$-uniform, 
since modifying it according to Observation~\ref{Ounif}
doesn't decrease the probability in \eqref{JB}.
Denote members of $\g$ by $S_i$ and set 
$\gz_i=\1_{\{Y_\ga\supseteq S_i\}}$. Then
\[
\mu:= \sum \E [\gz_i] =|\g|\ga^r
\]
and
\[
\gL:=
\sum\sum\{\E[\gz_i\gz_j]: S_i\cap S_j\neq \0\} \leq |\g|\sum_{t=1}^r\C{r}{t} \kappa^{-t}|\g|\ga^{2r-t} = \mu^2\sum_{t=1}^r\C{r}{t}(\ga \kappa)^{-t}
\]
(where the inequality holds because $\g$ is $\kappa$-spread),
and Janson's Inequality (e.g.~\citep[Thm.\ 2.18(ii)]{JLR}) bounds 
the probability in~\eqref{JB} by $\exp[-\mu^2/\gL]$.\end{proof}

\begin{corollary}\label{CJ}
Let $\g$ be as in Lemma~\ref{LJ}, 
let $t =\ga |Y|$ be an integer with $\ga\kappa\geq 2r$, and let $W=Y_t$.  Then
\[  
\pr(W\not\in \langle\g\rangle)  \leq 2\exp[-\ga\kappa/(2r)].
\]  
\end{corollary}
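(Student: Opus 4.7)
My plan is to deduce Corollary~\ref{CJ} from Lemma~\ref{LJ} by (a) simplifying the Janson-type bound of~\eqref{JB} under the hypothesis $\ga\kappa \geq 2r$, and (b) transferring the bound from the product measure $\mu_\ga$ (i.e.\ $Y_\ga$) to the uniform measure on $t$-sets (i.e.\ $W = Y_t$), paying a factor of $2$.

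First, for the analytic simplification: setting $y = (\ga\kappa)^{-1}$, we have
\[
\sum_{s=1}^r \binom{r}{s}(\ga\kappa)^{-s} = (1+y)^r - 1 \le e^{ry}-1 \le 2ry,
\]
where the last inequality uses $ry \le 1/2$ (from the hypothesis $\ga\kappa \geq 2r$) together with $e^x - 1 \leq 2x$ for $x \in [0,1/2]$. Plugging into Lemma~\ref{LJ} gives
\[
\pr(Y_\ga \not\in \langle\g\rangle) \le \exp[-\ga\kappa/(2r)].
\]

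Next, to pass from $Y_\ga$ to $W = Y_t$: since $\langle\g\rangle$ is upward-closed, $\pr(Y_k \not\in \langle\g\rangle)$ is non-increasing in $k$, so for every $k \leq t$ we have $\pr(Y_k \not\in \langle\g\rangle) \geq \pr(Y_t \not\in \langle\g\rangle)$. Conditioning on the size of $Y_\ga$,
\[
\pr(Y_\ga \not\in \langle\g\rangle) = \sum_k \pr(|Y_\ga|=k)\pr(Y_k \not\in \langle\g\rangle) \geq \pr(|Y_\ga| \leq t)\, \pr(Y_t \not\in \langle\g\rangle).
\]
Because $|Y_\ga| \sim \mathrm{Binomial}(|Y|,\ga)$ has integer mean $t$, a classical result puts its median at $t$, so $\pr(|Y_\ga| \leq t) \geq 1/2$, giving $\pr(W \not\in \langle\g\rangle) \le 2\pr(Y_\ga \not\in \langle\g\rangle) \leq 2\exp[-\ga\kappa/(2r)]$.

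Neither step is really an obstacle; if anything deserves care it is the product-to-uniform conversion, since one must ensure the inequality between $\pr(Y_k \not\in \langle\g\rangle)$ and $\pr(Y_t \not\in \langle\g\rangle)$ goes the right way and that the binomial-median fact is invoked cleanly. Both are standard, and the overall proof is just a short chain of inequalities bolted onto Lemma~\ref{LJ}.
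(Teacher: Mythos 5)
Your proof is correct and follows essentially the same route as the paper: the paper's proof is exactly the chain $\exp[-\ga\kappa/(2r)] \geq \pr(Y_\ga\not\in\langle\g\rangle) \geq \pr(|Y_\ga|\leq t)\pr(W\not\in\langle\g\rangle) \geq \pr(W\not\in\langle\g\rangle)/2$, using the same binomial-median fact; you merely spell out the estimate $\sum_{s\ge 1}\binom{r}{s}(\ga\kappa)^{-s}\le 2r/(\ga\kappa)$ that the paper leaves implicit.
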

\begin{proof}
Lemma~\ref{LJ} gives
\[
\exp[-\ga\kappa/(2r)] \geq \pr(Y_\ga\not\in \langle\g\rangle) 
\geq \pr(|Y_\ga|\leq t)\pr(W\not\in \langle\g\rangle) \geq \pr(W\not\in \langle\g\rangle) /2,
\]
where we use the fact that any binomial $\xi$ with $\E [\xi]\in \Z$ satisfies
$\pr(\xi\leq \E[\xi])\geq 1/2$; see e.g.~\citep{median}.
\end{proof}

\section{Proof of Theorem~\ref{MT}}\label{SMT}
It will be (very slightly) convenient to prove the theorem 
assuming $\h$ is $(2\kappa)$-spread.
Let $\gc$ and $C_0$ be as in Section~\ref{SML} and $\h$ as in the statement of Theorem~\ref{MT}, and recall that asymptotics refer to $\ell$. 
We may of course assume that  $\kappa \ge 2\gamma^{-1}C_0\log \ell$ (or the result is trivial with
a suitably adjusted $K$).

Fix an ordering ``$\prec$'' of $\h$.  
In what follows we will have a sequence $\h_i$, with $\h_0=\h$ and 
\[
\h_i\sub \{\chi_i(S,W_i): S\in \h_{i-1}\},
\]
where $W_i$ and $\chi_i$ will be defined below (with $\chi_i$ a version of the $\chi$
of Section~\ref{SML}).  We then order $\h_i$ by setting
\[
\chi_i(S,W_i)\prec_i\chi_i(S',W_i) \Lra S\prec_{i-1} S'.
\]
(So each member of $\h_i$ ultimately inherits its position in $\prec_i$ from some member of $\h$.
This is not very important: we will be applying Lemma~\ref{ML} repeatedly, and the
present convention just provides a concrete $\psi$ for each stage of the iteration.)

Set $C=C_0$ and $p= C/\kappa$, define $m$ by $(1-\gc)^m = \sqrt{\log \ell}/\ell$, and set $q= \log \ell/\kappa$. Then $\gc^{-1}\log \ell \sim m \le \gc^{-1}\log \ell$ and Theorem~\ref{MT} 
will follow from the next assertion.
\begin{claim}\label{toshow}
If W is a uniform $((mp+q)n)$-subset of $X$, then $W \in \langle\h\rangle$ w.h.p.
\end{claim}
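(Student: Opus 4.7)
My strategy follows the iterative bootstrap pioneered in~\citep{ALWZ}, now made efficient by Lemma~\ref{ML}. The idea is to expose $W$ in $m+1$ stages, shrinking the effective edge-size of a residue hypergraph at each stage, and then finish with Corollary~\ref{CJ} once edge-sizes are small enough.

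By a standard monotonicity argument, it suffices to show that $W':=W_0\cup W_1\cup\dots\cup W_m\in\langle\h\rangle$ w.h.p., where $W_1,\dots,W_m$ are independent uniformly random $np$-subsets of $X$ and $W_0$ is an independent uniformly random $qn$-subset (since $|W'|\le(mp+q)n=|W|$ and upward-closedness of $\langle\h\rangle$ bridges the gap). Define residue families $\h_0=\h,\h_1,\dots,\h_m$ inductively as in the setup of Section~\ref{SMT}: given $\h_{i-1}$ with the order $\prec_{i-1}$ and induced $\psi_i(Z):=\min_{\prec_{i-1}}\{S\in\h_{i-1}:S\subseteq Z\}$, sample $W_i$ and put
\[
\h_i = \{\chi_i(S,W_i) : S\in\h_{i-1},\ (S,W_i)\text{ good}\},
\]
with the inherited order. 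By the definition of ``good'', every element of $\h_i$ has size at most $r_i:=(1-\gc)^i\ell$, so $r_m\le\sqrt{\log\ell}$; and since $\psi_i(S'\cup W_i)\subseteq\chi_i(S',W_i)\cup W_i$ at each step, unrolling the recursion shows that whenever some $R\in\h_m$ is covered by $W_0$, one obtains an actual $S\in\h$ with $S\subseteq W'$.

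The two invariants to maintain on $\h_i$ are lower bounds on its size and spread. By Lemma~\ref{ML}, $\E[|\h_{i-1}|-|\h_i|]\le|\h_{i-1}|C_0^{-r_{i-1}/3}$; summing the (geometric-tail-dominated) losses with each $r_{i-1}\ge\sqrt{\log\ell}$ yields $\E[|\h|-|\h_m|]=o(|\h|)$, whence $|\h_m|\ge|\h|/2$ w.h.p. For spread, observe that $R=\chi_i(S,W_i)\supseteq T$ with $T\cap W_i=\emptyset$ forces $\psi_i(S\cup W_i)\supseteq T$, and hence $S\supseteq T$; iterating gives $|\h_m\cap\langle T\rangle|\le|\h\cap\langle T\rangle|\le(2\kappa)^{-|T|}|\h|$ for every $T\in\langle\h_m\rangle$ (which are automatically disjoint from each $W_j$), so $\h_m$ is $\kappa$-spread up to a harmless constant factor.

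Finally, on the (w.h.p.)\ event that $\h_m$ satisfies these invariants, apply Corollary~\ref{CJ} to $\h_m$ regarded as a family on the $(1-o(1))n$ vertices outside $W_1\cup\dots\cup W_m$, with $\alpha\approx q=\log\ell/\kappa$. Since $\alpha\kappa\gtrsim\log\ell\gg 2\sqrt{\log\ell}\ge 2r_m$, the failure probability is $2\exp[-\Omega(\sqrt{\log\ell})]=o(1)$. The main subtlety is propagating the spread invariant across $m\sim\log\ell$ iterations without catastrophic loss, which works here precisely because the only per-step degradation is the size ratio $|\h_{i-1}|/|\h_i|$, and this stays within a factor of $2$ thanks to Lemma~\ref{ML}'s efficient bound.
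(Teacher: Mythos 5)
Your proposal is correct and follows essentially the same route as the paper: iterate Lemma~\ref{ML} for $m$ rounds to shrink the residual edge sizes geometrically while preserving the size and spread invariants, then finish with the Janson bound of Corollary~\ref{CJ} once edges have size $r_m=\sqrt{\log\ell}$. Your minor variations (independent rather than nested $W_i$'s bridged by monotonicity, and summing expected losses rather than the paper's per-step ``successful'' conditioning with $\gd=1/(2m)$) are harmless, though the latter should formally be phrased with a stopping time or conditioning, exactly because---as you note---Lemma~\ref{ML} at step $i$ is only available while $|\h_{i-1}|\ge|\h|/2$.
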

\begin{proof}

Set $\gd = 1/(2m)$.  Let $r_0=\ell$ and $r_i= (1-\gc)r_{i-1} = (1-\gc)^ir_0$ for $i\in [m]$.
Let $X_0=X$ and, for $i=1\dots m$, let $W_i$ be uniform from $\C{X_{i-1}}{np}$
and set $X_i=X_{i-1}\sm W_i$. 
(Note the assumption $\kappa \ge 2\gamma^{-1}C_0\log \ell$ ensures $|X_m| \ge n/2$.)

For $S\in \h_{i-1}$ let $\chi_i(S,W_i)=S'\sm W_i$, where $S'$ is the first member of $\h_{i-1}$
contained in $W_i\cup S$ (with $\h_{i-1}$ ordered by $\prec_{i-1}$).
Say $S$ is \emph{good} if $|\chi_i(S,W_i)|\leq r_i$ (and \emph{bad} otherwise),
and set 
\[
\h_i=\{\chi_i(S,W_i):\mbox{$S\in \h_{i-1}$ is good}\}.
\]
Thus $\h_i$ is an $r_i$-bounded collection of subsets of $X_i$ and inherits the ordering $\prec_i$
as described above.

Finally, choose $W_{m+1}$ uniformly from $\C{X_m}{nq}$.  Then $W:=W_1\cup\cdots\cup W_{m+1}$ is as in Claim~\ref{toshow}.
Note also that $W\in\langle\h\rangle$ whenever $W_{m+1}\in \langle\h_m\rangle$.
(More generally, $W_1\cup\cdots\cup W_i\cup Y\in \langle\h\rangle$ whenever $Y\sub X_i$ 
lies in $\langle\h_i\rangle$.)

So to prove the claim, we just need to show
\beq{toshow'}
\pr(W_{m+1}\in \langle\h_m\rangle) = 1-o(1)
\enq
(where the $\pr$ refers to the entire sequence $W_1\dots W_{m+1}$).

For $i\in [m]$ call $W_i$ \emph{successful} if $|\h_i|\geq (1-\gd)|\h_{i-1}|$,
call $W_{m+1}$ successful if it lies in $\langle\h_m\rangle$, and
say a sequence of $W_i$'s is successful if each of its entries is.
We show  a little more than~\eqref{toshow'}:
\beq{toshow''}
\pr(\mbox{$W_1\dots W_{m+1}$ is successful}) = 1-\exp\left[-\gO(\sqrt{\log \ell})\right].
\enq

For $i\in [m]$, according to Lemma~\ref{ML} (and Markov's Inequality), 
\[
\pr(\mbox{$W_i$ is \emph{not} successful}\,|\,\mbox{$W_1\dots W_{i-1}$ is successful})  
< \gd^{-1}C^{-r_{i-1}/3},
\]
since $W_1\dots W_{i-1} $ successful implies
$
|\h_{i-1}| > (1-\gd)^m|\h|>|\h|/2, 
$
which, 
since $|\h_{i-1}\cap \langle I\rangle|\leq |\h\cap \langle I\rangle|$ and we assume $\h$
is $(2\kappa)$-spread),
gives the spread condition~\eqref{spread} for $\h_{i-1}$.
Thus
\beq{Pbd1}
\pr(\mbox{$W_1\dots W_m$ is successful}) > 1-\gd^{-1}\sum_{i=1}^mC^{-r_{i-1}/3} ~
 > 1-\exp\left[-\sqrt{\log \ell}\right]
\enq
(using $r_m=\sqrt{\log\ell}$).

Finally, if $W_1\dots W_m$ is successful, then Corollary~\ref{CJ} 
(applied with $\g=\h_m$, $Y=X_m$, $\ga=nq/|Y|\geq q$,
$r=r_m $, 
and $W=W_{m+1}$)
gives 
\beq{Pbd2}
\pr(W_{m+1}\not\in \langle\h_m\rangle)\leq 2\exp\left[-\sqrt{\log \ell} / 2\right] ,
\enq
and we have \eqref{toshow''} and the claim.
\end{proof}

\section{Proof of Theorem~\ref{TFS}}\label{SFS}

We assume the setup of Theorem~\ref{TFS} with
$\gc$ and $C_0$ as in Section~\ref{SML} and $\kappa \ge C_0^2$ (or there is nothing to prove).
We may assume $\h$ is $\ell$-uniform, since the construction 
of Observation~\ref{Ounif} 
produces an $\ell$-uniform, $\kappa$-spread $\g$ with $\xi_\g\geq \xi_\h$. In particular this gives
\beq{lmax}
|\h|\ell =\sum_{x\in X}|\h\cap \langle x\rangle|\leq n\kappa^{-1}|\h|.
\enq

We first assume $\kappa$ is \emph{slightly} large, precisely
\beq{bigK}
\kappa \geq \log ^3\ell;
\enq
the similar but easier argument for smaller values will be given at the end.
(The bound in \eqref{bigK} is convenient but there is nothing delicate about this choice.)

\begin{claim}\label{CFS}
For $\kappa$ as in \eqref{bigK} and $C_0\leq C \le \gc\kappa/(4\log\ell)$,
\[
\pr(\xi_\h> (3C/\gc)\ell/\kappa) < \exp[-(\log \ell \log C)/4].
\]
\end{claim}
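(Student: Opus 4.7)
The plan mirrors the proof of Theorem~\ref{MT}, coupling the $\xi_x$'s to a sequence of random subsets and applying the machinery of Section~\ref{SML} iteratively, with parameters now tuned as functions of $C$. First, we reduce the tail event to a covering event: setting $p = 3C/(\gamma\kappa)$ and $X_p = \{x : \xi_x \leq p\}$, if $X_p \supseteq S$ for some $S \in \h$, then $\xi_\h \leq \sum_{x \in S}\xi_x \leq p|S| = (3C/\gamma)\ell/\kappa$. So it suffices to show
\[
\Pr(X_p \notin \langle\h\rangle) \leq \exp[-(\log\ell\log C)/4].
\]

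To apply the iterative machinery, we couple $X_p$ to a nested subset sequence: partition $[0, p]$ into disjoint intervals $I_1, \ldots, I_{m+1}$, and set $W_i = \{x : \xi_x \in I_i\}$, so that $X_p = \bigsqcup_i W_i$. Conditional on the multiset of $\xi$-values, each $W_i$ is a uniformly random subset of $X \setminus \bigcup_{j<i}W_j$ of the right (tightly concentrated binomial) size, matching the setup for Lemma~\ref{ML} and Corollary~\ref{CJ}. Exactly as in Claim~\ref{toshow}, set $\h_0 = \h$ and $r_i = (1-\gamma)^i \ell$, form $\h_i$ from the good reductions $\chi_i(S, W_i)$, and call step $i$ successful if $|\h_i| \geq (1-\delta)|\h_{i-1}|$. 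Lemma~\ref{ML} together with Markov yields per-step failure at most $\delta^{-1}(|I_i|\kappa)^{-r_{i-1}/3}$, while Corollary~\ref{CJ} (applied with $\g = \h_m$ on the ground set $X \setminus \bigcup_{i \leq m} W_i$) bounds the Janson step by $2\exp[-|I_{m+1}|\kappa/(2r_m)]$.

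The parameter schedule: take $|I_i| = C_0/\kappa$ for $i \leq m$ (so Lemma~\ref{ML} applies with the base scale $C_0$), and $|I_{m+1}| = q = p - mC_0/\kappa$. Choose $m$ so that $r_m$ is on the order of $(\log\ell\log C)/\log C_0$; then each iteration failure is roughly $\exp[-(\log\ell\log C)/4]$, and summing over the at most $O(\log \ell)$ iterations (with $\delta$ a constant, say $1/2$) the total iteration failure is $\leq \tfrac12 \exp[-(\log\ell\log C)/4]$. Simultaneously $q$ must satisfy $q\kappa \gtrsim r_m \log\ell\log C$ so that Corollary~\ref{CJ} contributes at most $\tfrac12 \exp[-(\log\ell\log C)/4]$.

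The main obstacle is verifying the three-way balance between budget ($mC_0/\kappa + q \leq p$), iteration failure, and Janson failure, simultaneously over the entire range $C_0 \leq C \leq \gamma\kappa/(4\log\ell)$. The assumption $\kappa \geq \log^3 \ell$ from \eqref{bigK} is what gives enough room in the Janson step: it forces $r_m$ to be much smaller than $\kappa/\log\ell$, so the required $q\kappa \sim r_m \log\ell\log C$ fits comfortably under the budget $3C/\gamma$ given $C \leq \gamma\kappa/(4\log\ell)$. Essentially all the arithmetic work in proving the claim is hidden in checking these inequalities for the prescribed schedule.
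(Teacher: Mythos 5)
There is a genuine gap, and it is in the very first step. You reduce the claim to showing that $X_p=\{x:\xi_x\le p\}$ with $p=3C/(\gamma\kappa)$ covers some $S\in\h$, by bounding every $\xi_x$, $x\in S$, by $p$. But the iterative machinery cannot succeed inside a set of measure only $3C/(\gamma\kappa)$: reducing the uniformity from $\ell$ down to anything polylogarithmic requires $m\approx\gamma^{-1}\log\ell$ rounds, and Lemma~\ref{ML} forces each round to consume measure at least $C_0/\kappa$, for a total of order $\gamma^{-1}C_0\log\ell/\kappa$. Your budget $mC_0/\kappa+q\le 3C/(\gamma\kappa)$ therefore fails whenever $C\ll C_0\log\ell$ --- in particular at $C=C_0$, which is exactly the case needed for the w.h.p.\ statement of Theorem~\ref{TFS}. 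The hypothesis $\kappa\ge\log^3\ell$ does not help here: the obstruction is the ratio $m C_0/(\kappa p)\approx C_0\log\ell/(3C)$, which is independent of $\kappa$. Chasing the constants you deferred would reveal this immediately.

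The missing idea is the paper's weighted accounting. One must sample all the way up to level $\approx mp+q$ with $p=C/\kappa$ (total measure $\approx\gamma^{-1}C\log\ell/\kappa$, a $\log\ell$ factor more than your $p$), and recover the loss by observing that the elements of $S$ acquired in round $i$ --- whose $\xi$-values are only bounded by $\eps_i\approx 2ip$ --- need only cover the remnant $S\sm\ov W_{i-1}$, which has size at most $r_{i-1}=(1-\gamma)^{i-1}\ell$ when the rounds are successful (Proposition~\ref{PS}). Abel summation then gives
\[
\xi_\h\le\sum_i \eps_i|S\cap W_i|=\sum_i(\eps_i-\eps_{i-1})\,|S\sm\ov W_{i-1}|
\le 2p\,\ell\sum_{i\ge1}(1-\gamma)^{i-1}+2q\,(1-\gamma)^m\ell,
\]
which telescopes to $O(p\ell/\gamma)=O(C\ell/(\gamma\kappa))$ even though the unweighted bound $(mp+q)\ell$ would be $\log\ell$ times larger. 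This is the entire content of Proposition~\ref{FinalP} and is what Theorem~\ref{TFS} adds over Theorem~\ref{MT}; without it the $\log\ell$ cannot be removed. Two smaller discrepancies: the paper runs the iteration at scale $C$ (not $C_0$) with $r_m=\log\ell$, which is how $\log C$ enters the exponent without shrinking $r_m$ below $\log\ell$; and it defines $W_i$ by the \emph{ranks} of the $\xi_x$'s (so the $W_i$ are exactly uniform of fixed size) and separately shows $\xi_x\le\eps_i$ on $W_i$ with probability $1-e^{-\Omega(C\ell)}$, rather than conditioning on interval counts.
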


\begin{proof}[Proof of Theorem~\ref{TFS}  in regime \eqref{bigK} given Claim~\ref{CFS}]
The ``w.h.p.'' statement is immediate (take $C=C_0$).  
For the expectation, $Z_\h$, 
set $t=(3C_0/\gc)\ell/\kappa$ and $T=3\ell/(4\log \ell)$.
By Claim~\ref{CFS} we have, for all $x\in [t,T]$,
\[
\pr(\xi_\h> x) \leq f(x):= \exp\left[-\log \ell \log(\gc \kappa x/3\ell)/4\right] = (bx)^a = b^a x^a,
\]
where $a = -(\log\ell)/4$ and $b = \gamma \kappa/3\ell$.
Noting that $\xi_\h\leq\ell$, we then have
\[
Z_\h\leq t + \int_t^T\pr(\xi_\h>x)dx +\ell \pr(\xi_\h>T)
\leq   t +\int_t^T f(x)dx + \ell f(T) =O(\ell/\kappa).
\]
Here $t=O(\ell/\kappa)$ and the other terms are much smaller:
the integral is less than $-1/(a+1)b^at^{a+1} =O(1/\log \ell)C_0^at~$, while \eqref{bigK} 
easily implies that $f(T) =(\gc\kappa/(4\log\ell))^a$ is $o(1/\kappa)$.
\end{proof}

\begin{proof}[Proof of Claim~\ref{CFS}]

Terms not defined here (beginning with $p=C/\kappa$ and $W_i$;
note $C$ is now as in Claim~\ref{CFS}, rather than set to $C_0$) are as in Section~\ref{SMT},
but we (re)define $m$ by 
 $(1-\gc)^m = \log \ell/\ell$ and set $q=\log C\log^2 \ell/\kappa$, 
noting that~\eqref{lmax} gives $p\geq C\ell /n$.

\nin

It's now convenient to generate the $W_i$'s 
using the $\xi_x$'s in the natural way:  let
\[
a_i=\left\{\begin{array}{ll}
(ip)n & \mbox{if $i\in \{0\}\cup [m]$,}\\
(mp+q)n& \mbox{if $i=m+1$,}
\end{array}\right.
\]
and let $W_i$ consist of the $x$'s in positions $a_{i-1}+1\dots a_i$ 
when $X$ is ordered according to the $\xi_x$'s.

\begin{proposition}\label{Wobs}
With probability $1-e^{-\gO(C\ell)}$,
\beq{Wobs'}
\xi_x\leq \eps_i:=\left\{\begin{array}{ll}
2ip & \mbox{if $i\in \{0\}\cup [m]$}\\
2(mp+q)& \mbox{if $i=m+1$}
\end{array}\right\}
~\mbox{for all $i$ and $x\in W_i$.}
\enq
\end{proposition}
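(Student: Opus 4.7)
The plan is a standard order-statistics argument via Chernoff. The key observation is that because the $W_i$'s are defined by ordering $X$ according to the $\xi_x$'s, the set $W_1\cup\cdots\cup W_i$ is exactly the $a_i$ smallest-weight elements of $X$; hence the maximum of $\xi_x$ over $x\in W_i$ is the $a_i$-th order statistic of the sample $(\xi_x)_{x\in X}$. Consequently, the event ``$\xi_x\le\eps_i$ for all $x\in W_i$'' is equivalent to ``$|\{x\in X:\xi_x\le\eps_i\}|\ge a_i$.''

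Let $N_i:=|\{x\in X:\xi_x\le\eps_i\}|$, which is Binomial$(n,\eps_i)$. Because we chose $\eps_i=2a_i/n$ in every case, we have $\E[N_i]=2a_i$, so the event we want is $N_i\ge \E[N_i]/2$. A standard Chernoff bound gives
\[
\pr(N_i<a_i)=\pr(N_i<\E[N_i]/2)\le \exp[-a_i/4].
\]
I would then lower-bound $a_i$ uniformly. For every $i\ge 1$ we have $a_i\ge a_1=pn=Cn/\kappa$, and~\eqref{lmax} gives $n/\kappa\ge\ell$, so $a_i\ge C\ell$.

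Finally, a union bound over the $m+1$ values of $i$ yields
\[
\pr\bigl(\eqref{Wobs'} \text{ fails}\bigr)\le (m+1)\exp[-C\ell/4]=\exp[-\gO(C\ell)],
\]
since $m=O(\log\ell)$ is negligible compared with $e^{C\ell}$ (recall $\ell$ is large). The only mild point to check is that~\eqref{lmax} does furnish the lower bound $n\ge\kappa\ell$ used above; beyond that, there is essentially no obstacle, since the statement is purely about the order statistics of $n$ i.i.d.\ uniforms and has nothing to do with $\h$ except through the inequality $n\ge\kappa\ell$.
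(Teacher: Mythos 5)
Your proof is correct and follows essentially the same route as the paper: reduce failure at stage $i$ to the event $|\{x:\xi_x\le\eps_i\}|<a_i$, note this count is Binomial with mean $\eps_i n=2a_i\ge 2C\ell$ (using \eqref{lmax} to get $n/\kappa\ge\ell$), and finish with a Chernoff bound and a union bound over the $m+1$ stages. The paper states this more tersely, citing~\citep[Theorem 2.1]{JLR} for the tail bound, but the argument is identical.
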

\begin{proof}
Failure at $i\geq 1$ implies 
\beq{ai}
|\xi^{-1}[0,\eps_i]|< a_i.
\enq
But $|\xi^{-1}[0,\eps_i]|$ is binomial with mean $\eps_i n =2 a_i\geq 2C\ell$,
so the probability that~\eqref{ai} occurs for some $i$ is less than $\exp[-\gO(C\ell)] $
(see e.g.~\citep[Theorem 2.1]{JLR}).
\end{proof}

We now write $\ov{W}_i$ for $W_1\cup\cdots\cup W_i$.
\begin{proposition}\label{PS}
If $W_{m+1}\in \langle\h_m\rangle$, then $W$ contains some $S\in \h$ with 
\[   
|S\sm \ov{W}_i|\leq r_i~\forall i\in [m].
\]   
\end{proposition}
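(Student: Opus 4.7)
The plan is to construct the desired $S$ by tracing back the ``ancestry'' of an element of $\h_m$ contained in $W_{m+1}$. Concretely, since $W_{m+1}\in\langle\h_m\rangle$, I would pick some $S^{(m)}\in\h_m$ with $S^{(m)}\sub W_{m+1}$, and then walk backwards through the iteration.

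By the definition $\h_i=\{\chi_i(S,W_i):S\in\h_{i-1}\text{ good}\}$ and the formula $\chi_i(S,W_i)=S'\sm W_i$ with $S'\in\h_{i-1}$, every $T\in\h_i$ is of the form $T=S'\sm W_i$ for some $S'\in\h_{i-1}$. So starting from $S^{(m)}$ I can inductively choose $S^{(i-1)}\in\h_{i-1}$ (for $i=m,m-1,\dots,1$) satisfying
\[
S^{(i)}=S^{(i-1)}\sm W_i,
\]
and then set $S:=S^{(0)}\in\h=\h_0$.

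The key step is the identity $S^{(i)}=S\sm\ov{W}_i$ for every $i\in\{0,1,\dots,m\}$, which follows by a one-line induction: the base case $i=0$ is trivial, and the step is
\[
S^{(i)}=S^{(i-1)}\sm W_i=(S\sm\ov{W}_{i-1})\sm W_i=S\sm\ov{W}_i.
\]
Since $\h_i$ is $r_i$-bounded by construction, we get $|S\sm\ov{W}_i|=|S^{(i)}|\le r_i$ for each $i\in[m]$, which is the main conclusion of the proposition. Moreover, $S\sm\ov{W}_m=S^{(m)}\sub W_{m+1}$ gives $S\sub\ov{W}_m\cup W_{m+1}=W$, so $S$ is indeed contained in $W$.

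There is no real obstacle here: the statement is essentially bookkeeping built into the recursive definition of the $\h_i$'s, and the only thing one needs to be a bit careful about is that at each step of the backwards walk such an ancestor $S^{(i-1)}$ genuinely exists, which is guaranteed by the very definition of $\h_i$.
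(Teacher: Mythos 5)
Your proposal is correct and is essentially the paper's own proof: both trace the ancestry of an $S_m\in\h_m$ contained in $W_{m+1}$ back through the recursion $S_i=S_{i-1}\sm W_i$, deduce $S_i=S\sm\ov{W}_i$ by the same one-line induction, and conclude from the $r_i$-boundedness of $\h_i$. No gaps.
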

\begin{proof}
Suppose $W\supseteq S_m\in \h_m$.  By construction (of the $\h_i$'s) there are $S_{m-1}\dots S_1,S_0=:S$ 
with $S_i\in \h_i$ and $S_i=S_{i-1}\sm W_i$, whence
$S_i=S\sm \ov{W}_i$ for $i\in [m]$; and $S_i\in \h_i$ then gives the proposition.
\end{proof}

We now define ``success'' for $(\xi_x:x\in X)$
to mean that $W_1\dots W_{m+1}$ is successful in our earlier sense
\emph{and} \eqref{Wobs'} holds.
Notice that with our current values of $m$ and $q$ (and $r_m=\ell (1-\gc)^m = \log \ell$), we can replace 
the error terms in~\eqref{Pbd1} and~\eqref{Pbd2} by essentially
$\gd^{-1}C^{-\log\ell/3}$ and $e^{-\log C\log \ell/2}$, which with Proposition~\ref{Wobs} 
bounds the probability 
that $(\xi_x:x\in X)$ is \emph{not} successful by (say) $\exp[-(\log \ell \log C)/4]$. 

We finish with the following observation.

\begin{proposition}\label{FinalP}
If $(\xi_x:x\in X)$ is successful then $\xi_\h \leq (3C/\gc)\ell/\kappa$.
\end{proposition}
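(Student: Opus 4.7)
The plan is to pull a concrete $S \in \h$ out of the successful run via Proposition~\ref{PS}, bound each $\xi_x$ for $x \in S$ using the ceiling from~\eqref{Wobs'}, and then collapse the resulting telescoping sum by Abel summation and a geometric-series estimate.

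Since successfulness in particular gives $W_{m+1} \in \langle \h_m \rangle$, Proposition~\ref{PS} supplies $S \in \h$ with $S \sub \ov{W}_{m+1}$ and, setting $s_i := |S \sm \ov{W}_i|$, we have $s_0 = \ell$, $s_{m+1} = 0$, and $s_i \le r_i$ for every $i \in [m]$. Each $x \in S$ lies in a unique $W_i$, and~\eqref{Wobs'} bounds $\xi_x \le \eps_i$ there; grouping by $i$ and using $|S \cap W_i| = s_{i-1} - s_i$ yields
\[
\sum_{x \in S} \xi_x \le \sum_{i=1}^{m+1}(s_{i-1} - s_i)\eps_i = s_0 \eps_1 + \sum_{i=1}^{m} s_i (\eps_{i+1} - \eps_i),
\]
where the last equality is Abel summation (using $s_{m+1} = 0$). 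Since $\eps_{i+1} - \eps_i = 2p$ for $i \in [m-1]$ and $\eps_{m+1} - \eps_m = 2q$, substituting $s_i \le r_i = \ell(1-\gc)^i$ collapses the ``$2p$-portion'' to at most $2p \sum_{i=0}^{m-1} r_i \le 2p\ell/\gc$ via the geometric series, leaving a residual $2qr_m$.

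Plugging in $p = C/\kappa$, $q = \log C \log^2 \ell/\kappa$, and $r_m = \log \ell$, the dominant term is $2C\ell/(\gc\kappa)$ and the residual is $2\log C \log^3 \ell/\kappa$. The only mildly delicate point is checking that this residual fits into the slack between $2C\ell/(\gc\kappa)$ and the target $(3C/\gc)\ell/\kappa$; but this is painless because $\log^3 \ell = o(\ell)$, so for large $\ell$ the residual drops below $C\ell/(\gc\kappa)$. No single step is a genuine obstacle: Proposition~\ref{PS} does the substantive work, and everything downstream is a one-line Abel summation followed by a geometric-series estimate.
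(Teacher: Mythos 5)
Your proposal is correct and follows the paper's proof essentially verbatim: the same appeal to Proposition~\ref{PS} and~\eqref{Wobs'}, the same Abel summation rewriting $\sum_i \eps_i|S\cap W_i|$ as $\sum_i(\eps_i-\eps_{i-1})|S\sm\ov{W}_{i-1}|$, the same geometric-series bound giving $2p\ell/\gc$ plus the residual $2qr_m$, and the same final numerical check. No issues.
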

\begin{proof}
For $S$ as in Proposition~\ref{PS}, we have (with $W_0=\0$ and $\eps_0=0$)
\begin{align*}
\xi_\h &\leq \sum_{i=1}^{m+1} \eps_i|S\cap W_i|
~=~ 
\sum_{i=1}^{m+1} (\eps_i-\eps_{i-1})|S\sm \ov{W}_{i-1}|\\
&\leq
2\left[\sum_{i= 1}^m(1-\gc)^{i-1} p+ (1-\gc)^mq\right] \ell \\
&\leq 2[C/(\gc\kappa) + (\log \ell/\ell)(\log C\log^2\ell/\kappa)]\ell< (3C/\gc)\ell/\kappa. \qedhere
\end{align*}\end{proof}
This completes the proof of Claim~\ref{CFS} (and of
Theorem~\ref{TFS} when $\kappa $ satisfies \eqref{bigK}).
\end{proof}

Finally, for $\kappa$ below the bound in \eqref{bigK}
(actually, for $\kappa$ up to about $\ell/\log \ell$), a subset of the preceding argument suffices.
We proceed as before, but now only with $C=C_0$ (so $p=C_0/\kappa$), stopping at $m$
defined by $(1-\gc)^m = 1/\kappa$ (so $m\approx \gc^{-1}\log \kappa$).
The main difference here is that there is no ``Janson'' phase:
$W_1\dots W_m$ is successful with probability $1-\exp[-\gO(\ell/\kappa)]$, 
and when it \emph{is} successful we have (as in the proof of Proposition~\ref{FinalP},
now just taking $W_{m+1}=X\sm \ov{W}_m$)
\[
\xi_\h\leq 
\sum_{i=1}^m (\eps_i-\eps_{i-1})|S\sm \ov{W}_{i-1}| +|S\cap W_{m+1}| 
< 2(C_0/(\gc\kappa))\ell +\ell/\kappa
\]
(so also $Z_\h\leq O(\ell/\kappa) + \exp[-\gO(\ell/\kappa)]\ell = O(\ell/\kappa)$).

\section{Applications}\label{Applications}

Much of the significance of Theorem~\ref{maintheorem}---and of the skepticism with which 
Conjecture~\ref{CKK} was
viewed in~\citep{KK}---derives from the strength of their 
consequences, a few of which we discuss 
(\emph{briefly}) here.

For this discussion,
$\K^r_n=\C{V}{r}$ is
the complete $r$-graph on $V=[n]$,  
and 
$\h^r_{n,p}$ is the $r$-uniform counterpart of the usual binomial random graph $G_{n,p}$.
Given $r,n$ and an $r$-graph $H$, we use $\g_H$ for the collection of 
(unlabeled) copies of $H$ in $\K^r_n$ and $\f_H$
for $\langle\g_H\rangle$.  As usual, $\gD$ is maximum degree.

As noted earlier, Conjecture~\ref{CKK} was motivated especially by Shamir's Problem (since 
resolved in~\citep{JKV}), and the conjecture that became Montgomery's theorem~\citep{Montgomery}.
Very briefly:
for $n$ running over multiples of a given (fixed) $r$, Shamir's Problem asks for 
estimation of $p_c(\f_H)$ when $H$ is a perfect matching ($n/r$ disjoint edges),
and~\citep{JKV} proves the natural conjecture that this threshold is $\Theta(n^{-(r-1)}\log n)$;
and~\citep{Montgomery} shows that for fixed $d$, the threshold for $G_{n,p}$ to contain 
a given $n$-vertex tree 
with maximum degree $d$ is $\Theta(n^{-1}\log n)$,
where the implied constant in the upper bound depends on $d$ (though it probably shouldn't).
See~\citep{JKV,Montgomery} for some account of the history of these problems.
In both cases---and in most of the other examples mentioned following Theorem~\ref{TMr} (all but the one from~\citep{Kriv})---the 
lower bounds derive from the coupon-collectorish requirement that the (hyper)edges cover 
the vertices, and it is the upper bounds that are of interest.

In fact, Theorem~\ref{maintheorem} gives not just Montgomery's theorem, but its natural extension 
to $r$-graphs and more.
(Strictly speaking, Montgomery proves more than the original conjecture---see 
Section~\ref{Concluding}---and we are not so far recovering this stronger result.)
Say an $r$-graph $F$ is a \emph{forest} if it contains no \emph{cycle}, meaning distinct vertices 
$v_1\dots v_k$ and distinct edges $e_1\dots e_k$ such that $v_{i-1}, v_i\in e_i$ $\forall i$ 
(with subscripts mod $k$).
A \emph{spanning tree} is then a forest of size $(n-1)/(r-1)$.
For a (general) $r$-graph $F$, let $\rho(F) $ be the maximum size of a forest in $F$ and set
\[
\vp(F) = \max\{  1-\rho(F')/|F'|  :  \0\neq F'\sub F\}.
\]
\begin{theorem}\label{TMr}
For each $r$ and $c$ there is a $K$ such that if $H$ is an $r$-graph on $[n]$ with $\gD(H)\leq d$ and 
$\vp(H) \leq c/\log n$, then 
\[
p_c(\f_H) <Kdn^{-(r-1)}\log |H|. 
\]
\end{theorem}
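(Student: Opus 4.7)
The plan is to deduce Theorem~\ref{TMr} from Theorem~\ref{maintheorem} applied to $\f=\f_H$, which has $\ell(\f_H)=|H|$. The task thus reduces to showing $q_f(\f_H)\le K_0 d/n^{r-1}$ for some $K_0=K_0(r,c)$. Now Proposition~\ref{Tal6.7} has an equally easy converse (by the same LP duality): any $q$-spread probability measure supported on an increasing $\f$ witnesses $q_f(\f)\le q$. So it suffices to exhibit such a $\nu$ on $\f_H$ with $q=K_0 d/n^{r-1}$; the natural candidate is the law of $\phi(H)$ where $\phi:V(H)\hookrightarrow [n]$ is a uniformly random injection, i.e., uniform measure on labeled copies of $H$ in $\K_n^r$.

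For $S\sub\binom{[n]}{r}$ with $|S|=s$ and $|V(S)|=v$, direct counting gives $\pr(\phi(H)\supseteq S)=M(S,H)\cdot(n-v)!/n!$, where $M(S,H)$ counts injections $\tau:V(S)\to V(H)$ with $\tau(S)\sub E(H)$---i.e., labeled embeddings of the hypergraph $S$ into $H$. Using $(n-v)!/n!\le (2/n)^v$ for $v\le n/2$ (the range $v>n/2$ is handled by the trivial $\pr\le 1$ together with the fact that $q^s$ is then already tiny), the desired spread bound $\pr(\phi(H)\supseteq S)\le(K_0 d/n^{r-1})^s$ reduces to
\beq{tmrgoal}
C_r^s\cdot 2^v\cdot n^{(r-1)s+c(S)-v}\le K_0^s,
\enq
once one has $M(S,H)\le C_r^s\,n^{c(S)}\,d^s$ for an $r$-dependent constant $C_r$. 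The latter bound comes from a BFS-style unfolding using only $\gD(H)\le d$: root each of the $c(S)$ components of $S$ at a vertex of $V(H)$ (at most $n$ choices per component), and then attach subsequent edges of each component through already-placed vertices, paying at most $d$ choices of $H$-edge per edge of $S$ together with an $O_r(1)$ labeling factor.

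The exponent $E(S):=(r-1)s+c(S)-v$ in~\eqref{tmrgoal} is the \emph{excess} of $S$: nonnegative, and zero iff $S$ is a forest. Controlling $n^{E(S)}$ when $S$ has cycles is the main obstacle, and it is here that $\vp(H)\le c/\log n$ does its work. Applied to the subgraph $S\sub H$, the hypothesis gives $\rho(S)\ge s(1-c/\log n)$, so $s-\rho(S)\le cs/\log n$. Taking a maximum forest $F_i$ in each component $S_i$ of $S$ and summing the forest identity $|V(F_i)|=(r-1)|F_i|+(\text{number of components of }F_i)\ge (r-1)\rho(S_i)+1$ yields the clean bound $v\ge (r-1)\rho(S)+c(S)$, whence
\[
E(S)\le (r-1)(s-\rho(S))\le (r-1)cs/\log n \quad\text{and}\quad n^{E(S)}\le e^{(r-1)cs}.
\]
Combined with the crude $2^v\le 2^{rs}$, this bounds the left side of~\eqref{tmrgoal} by $(C_r\cdot 2^r\cdot e^{(r-1)c})^s$; taking $K_0:=C_r\cdot 2^r\cdot e^{(r-1)c}$, a constant depending only on $r$ and $c$, closes the spread argument. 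Theorem~\ref{maintheorem} then delivers $p_c(\f_H)\le K\cdot K_0\cdot d\,n^{-(r-1)}\log|H|$, as required.
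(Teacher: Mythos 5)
Your proposal is correct and follows exactly the route the paper intends: the paper reduces Theorem~\ref{TMr} to verifying the spread condition \eqref{Psig} for the uniform measure on copies of $H$ with $q=O(dn^{-(r-1)})$ and explicitly leaves that verification to the reader, and your embedding count $M(S,H)\le C_r^s\,n^{c(S)}d^s$ together with the inequality $v\ge (r-1)\rho(S)+c(S)$ and the hypothesis $\vp(H)\le c/\log n$ is precisely that verification (the paper's hint points to the analogous computation in \eqref{dsp1}). One small slip: dismissing the case $v>n/2$ via ``the trivial $\pr\le1$ together with the fact that $q^s$ is then already tiny'' is backwards --- the spread condition demands $\pr\le q^s$, which does not follow from $\pr\le 1$ when $q^s<1$. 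No separate case is needed, though: for $v>n/2$ one has $(n)_v\ge v!\ge (n/(2e))^v$, so $(n-v)!/n!\le(2e/n)^v$ holds for all $v$ and merely enlarges your constant $K_0$.
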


\nin
This gives $p_c(\f_H)=\Theta(n^{-(r-1)}\log n)$ if $H$ is a perfect matching
(as in Shamir's Problem), 
or
a ``loose Hamiltonian cycle'' (a result of~\citep{DFLS}, to which we refer for definitions and
history of the problem),
and $p_c(\f_H) <Kdn^{-(r-1)}\log n$ if $H$ is a
spanning tree with $\gD(H)\leq d$.  For fixed $d$ the latter
is the aforementioned $r$-graph generalization of~\citep{Montgomery}
(or a slight improvement thereof in that the dependence on $d$---which, again,
is probably unnecessary---is explicit),
and for $d=n^{\gO(1)}$ it is a result of Krivelevich~\citep[Theorem 1]{Kriv},
which is again tight up to the value of $K$ (see~\citep[Theorem 2]{Kriv}).

\mn

The last application we discuss here was suggested to us by Simon Griffiths and  Rob Morris. Set $c_d =(d!)^{2/(d(d+1))}$ and
$p^*(d,n) =c_dn^{-2/(d+1)}(\log n)^{2/(d(d+1))}.$

\begin{theorem}\label{TDelta}
For fixed d and H any graph on $[n]$ with $\gD(H)\leq d$, 
\beq{d.asymp}
p_c(\f_H) < (1+o(1))p^*(d,n).
\enq
\end{theorem}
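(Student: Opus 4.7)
The plan is to reduce Theorem~\ref{TDelta} to an application of Theorem~\ref{MT} via a sprinkling argument. Direct application of Theorem~\ref{maintheorem} to $\g_H$ yields only $p_c(\f_H) = O(p^*(d,n))$ with an absolute constant hidden in the $O$, so to achieve the sharp $(1+o(1))$ factor we split the randomness into two rounds: a large ``classical'' round $G_{n,p_1}$ providing $K_{d+1}$-structure, and a small ``residual'' round $G_{n,p_2}$ that is handled by Theorem~\ref{MT}. First, by adding a bounded number of auxiliary vertices and edges we may enlarge $H$ to a $d$-regular $H'\supseteq H$ on at most $n+O(d)$ vertices, and since $G_{n,p}\supseteq H'\Ra G_{n,p}\supseteq H$ we have $p_c(\f_H)\le p_c(\f_{H'})$; we henceforth take $H$ itself to be $d$-regular on $n$ vertices. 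Fix $\eps=\eps(n)\to 0$ sufficiently slowly, set $p=(1+\eps)p^*$, and write $G_{n,p}=G_{n,p_1}\cup G_{n,p_2}$ (independent) with $p_1=(1+\eps/2)p^*$ and $p_2=\Theta(\eps p^*)$.

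A standard Janson-type second-moment argument shows that $G_{n,p_1}$ w.h.p.\ satisfies a strong $K_{d+1}$-extension property: for every $U\sub[n]$ with $|U|\le d$, the number of $K_{d+1}$-copies in $G_{n,p_1}$ containing $U$ is within $(1\pm o(1))$ of its expectation $\binom{n-|U|}{d+1-|U|}p_1^{\binom{d+1}{2}-\binom{|U|}{2}}$; in particular each vertex lies in $(1+o(1))\log n$ such copies, and more generally small sets have many common extensions. Condition on any such $G_{n,p_1}$ and, on the ground set $X:=\binom{[n]}{2}\sm E(G_{n,p_1})$, define the hypergraph $\h$ whose edges are the sets $E(\phi(H))\sm E(G_{n,p_1})$ for those injective embeddings $\phi:V(H)\to[n]$ for which, for each $v\in V(H)$, the $(d+1)$-set $\phi(v)\cup\phi(N_H(v))$ is contained in the vertex set of some $K_{d+1}$-copy of $G_{n,p_1}$. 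By construction, $G_{n,p_2}\in\langle\h\rangle$ implies $G_{n,p_1}\cup G_{n,p_2}\supseteq\phi(H)$, so it suffices to prove $G_{n,p_2}\in\langle\h\rangle$ w.h.p.

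To invoke Theorem~\ref{MT} it suffices to verify that $\h$ is $\kappa$-spread with $\kappa=\gO(1/(\eps p^*))$: since $\ell(\h)\le|E(H)|\le dn/2$ we have $\log\ell=O(\log n)$, so Theorem~\ref{MT} then produces (w.h.p.) a uniformly random subset of $X$ of size $O(\eps p^*|X|)=O(p_2|X|)$ lying in $\langle\h\rangle$, which transfers to $G_{n,p_2}\in\langle\h\rangle$ by a standard comparison between size-conditioned and $p_2$-random subsets. The main obstacle is precisely this spread verification: one must bound, for each $S\sub X$ with $|S|=k$, the number of valid $\phi$'s whose associated $\h$-edge contains $S$ by at most $\kappa^{-k}|\h|$. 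Each edge $e\in S$ forced through a local $K_{d+1}$ at some $\phi(v)$ cuts the count of valid $\phi$'s by a factor of roughly $p_1\sim p^*$, thanks to the extension estimates; the subtlety is that several edges of $S$ can share endpoints in $\phi(V(H))$ or involve overlapping neighborhoods in $H$, at which point the naive edge-by-edge product bound breaks down and one must use the stronger extension estimates on subsets of size up to $d+1$, coupled with a careful case analysis tracking the intersection pattern of $S$ with the various local $K_{d+1}$'s. Once this spread estimate is in hand, Theorem~\ref{MT} closes the argument.
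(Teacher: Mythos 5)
Your high-level instinct---two independent rounds, one at density $\sim p^*$ and one much sparser round closed out by the spread machinery---matches the paper, but your decomposition fails at several concrete points. (1) The residual hypergraph is degenerate. After your regularization $H$ is $d$-regular, so $\phi(v)\cup\phi(N_H(v))$ has exactly $d+1$ elements, and ``contained in the vertex set of some $K_{d+1}$-copy of $G_{n,p_1}$'' forces that set to induce a clique in $G_{n,p_1}$. Hence \emph{every} edge of $\phi(H)$ already lies in $G_{n,p_1}$, every edge of your $\h$ is the empty set, and $G_{n,p_2}$ contributes nothing: the problem becomes whether $G_{n,p_1}$ contains the ``clique closure'' $H^+$ of $H$ (all closed neighbourhoods completed to cliques). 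For most $H$ this is impossible at $p_1\sim p^*$; e.g.\ for $H$ a Hamilton cycle ($d=2$), $H^+$ is the square of a Hamilton cycle, with threshold $\Theta(n^{-1/2})\gg n^{-2/3+o(1)}$. (2) The extension property you invoke is false: for $2\le |U|\le d$ the expected number of $K_{d+1}$'s containing $U$ is at most $n\,p_1^{d}=n^{-(d-1)/(d+1)+o(1)}\to 0$, so almost all such $U$ extend to no $K_{d+1}$ at all; even for $|U|=1$ the counts are Poisson-like with mean $\Theta(\log n)$ and are not uniformly $(1\pm o(1))$ of the mean over all vertices---indeed the failure of such uniformity is exactly why $p^*$ is the factor threshold. (3) In the one case where your construction is not vacuous, namely $H$ a $K_{d+1}$-factor, it reduces precisely to the sharp threshold for $K_{d+1}$-factors, a deep external result (Theorem~\ref{TKRH}) that cannot come out of Theorem~\ref{MT}, which inherently loses an absolute constant and a $\log\ell$; you neither cite nor prove it. (4) The spread estimate you defer is the entire mathematical content, and as formulated it is not available.

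The paper proceeds differently: it splits $H$ itself rather than each neighbourhood. Let $H_1$ be the union of the components of $H$ isomorphic to $K_{d+1}$ and $H_2=H-H_1$. Lemma~\ref{LDelta}, via the forest-to-edge ratio computation \eqref{dsp2}, shows $q_f(\f_{H_2})\le n^{-(2/(d+1)+\eps_d)}$: once $K_{d+1}$-components are excluded, the fractional expectation-threshold drops by a \emph{polynomial} factor, so Theorem~\ref{maintheorem} embeds $H_2$ at some $p_2$ with $p_2\ll p^*$. The partial $K_{d+1}$-factor $H_1$ is then embedded in an independent $G_{n,p_1}$ with $p_1\sim p^*$ by quoting Theorem~\ref{TKRH}, and the two rounds are combined with a short case analysis on $|V(H_2)|$. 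The key structural fact---that $K_{d+1}$-components are the \emph{unique} obstruction to a polynomially better spread---is what makes \eqref{d.asymp} sharp, and it is absent from your argument.
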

\nin

When $(d+1)\,|\,n$ and 
$H$ is a $K_{d+1}$-factor (that is, $n/(d+1)$ disjoint $K_{d+1}$'s),
$p^*(d,n)$ is the asymptotic value of $p_c(\f_H)$. Here \eqref{d.asymp} with $O(1)$ in place of $1+o(1)$ was proved in
\citep{JKV}, while the asymptotics are given by the combination of~\citep{HT2} and~\citep{Riordan,Heckel}; we state this in a form convenient for use below:
\begin{theorem}\label{TKRH}
For fixed d and $\eps>0$, and n ranging over multiples of $d+1$, 
if $p> (1+\eps)p^*(d,n)$, then
$G_{n,p}$ contains a $K_{d+1}$-factor w.h.p. \qed
\end{theorem}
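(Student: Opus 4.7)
My plan is to establish the sharp one-sided threshold by combining Theorem~\ref{maintheorem} with an absorber construction and a Friedgut-style sharp-threshold reduction. Since Theorem~\ref{maintheorem} carries an unspecified universal constant $K$, it cannot by itself isolate the limit value $c_d$; the $(1+o(1))$ factor must come through sharp-threshold machinery on top of the coarse bound.

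First I would verify the coarse containment by applying Theorem~\ref{maintheorem} to $\f_H$ with $\ell(\f_H) = \Theta(n)$, using as spread witness the uniform measure $\nu$ on $K_{d+1}$-factors of $K_n$. A Stirling estimate shows that for any set $T$ consisting of $k$ pairwise vertex-disjoint $(d+1)$-cliques, $\nu$ assigns to $T$ probability $(d!/n^d)^k(1+o(1))^k$; since $d!/n^d = (c_d n^{-2/(d+1)})^{\binom{d+1}{2}}$, the measure $\nu$ (viewed on the edge set) is $\kappa$-spread with $\kappa^{-1} = c_d n^{-2/(d+1)}$. This yields $p_c(\f_H) \le K\kappa^{-1}\log n = O(p^*\cdot(\log n)^{1 - 2/(d(d+1))})$, which is tight up to a polylog.

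To close the polylog and recover the sharp constant, I would combine three ingredients: (i) a Friedgut-type sharp-threshold theorem for the $S_n$-symmetric property $\f_H$, giving that the transition from $\mu_p(\f_H) = \delta$ to $\mu_p(\f_H) = 1-\delta$ has width $o(p_{1/2}(n))$; (ii) the first-moment lower bound $\mu_p(\f_H) \to 0$ for $p \le (1-\eps)p^*$, obtained by Markov on $\E[\#K_{d+1}\text{-factors}]$, which pins $p_{1/2}(\f_H) \ge (1-o(1))p^*$; and (iii) a constant-probability upper bound $\mu_{(1+\eps)p^*}(\f_H) \ge c > 0$, which pins $p_{1/2}(\f_H) \le (1+\eps)p^*$. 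Together these force $p_{1/2}(\f_H) = (1+o(1))p^*$, and sharpness then yields $p_{1-\delta}(\f_H) \le (1+o(1))p^*$ as required. For ingredient~(iii) I would use an absorber-plus-near-factor decomposition: reserve a random $o(n)$-vertex set $A$ whose $p$-random edges admit, via a local-lemma construction at the cheap scale $p = \omega(p^*)$, the ability to complete any compatible $o(n)$-vertex leftover into a $K_{d+1}$-factor, and then find a $K_{d+1}$-packing on $[n]\setminus A$ covering all but $o(n)$ vertices at $p = (1+\eps)p^*$ via the coarse step above applied to the \emph{near}-factor family.

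The main obstacle is ingredient~(iii): converting the polylog-lossy Theorem~\ref{maintheorem} bound into a containment statement with sharp leading constant. The absorber sidesteps the loss by only demanding a sharp count on an $o(n)$-sized leftover (where the polylog is negligible against $|A|/n = o(1)$), but the construction of such an absorber, together with the verification that it can swallow any $o(n)$-vertex residue, is delicate and is the technical heart of both Riordan's and Heckel's work. A secondary obstacle is verifying the Friedgut hypothesis, i.e.\ that $\f_H$ has no local obstruction at $p \asymp p^*$; here the highly symmetric global nature of $K_{d+1}$-factors is used to rule out any small-subgraph-density event capable of producing a coarse threshold.
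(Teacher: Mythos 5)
You should first note that the paper does not prove Theorem~\ref{TKRH} at all: it is imported as a black box (hence the \qed in the statement), being the combination of \citep{JKV} for the $O(1)$ version with the sharp asymptotics for Shamir's problem \citep{HT2} and the transference results of Riordan and Heckel \citep{Riordan,Heckel}. So your proposal is not a variant of the paper's argument but an attempt to reprove an external result, and as written it has a genuine gap exactly where you place the weight, namely ingredient~(iii).

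The absorber mechanism you describe cannot work at density $p=(1+\eps)p^*(d,n)$. At this scale a fixed vertex $v$ lies in only $\Theta(\log n)$ copies of $K_{d+1}$ in the \emph{entire} graph (this coupon-collector count is precisely what determines the constant $c_d$), so the expected number of copies containing $v$ whose other $d$ vertices all lie in a reserved set $A$ with $|A|=o(n)$ is $\Theta\bigl((|A|/n)^d\log n\bigr)=o(1)$; w.h.p.\ a typical vertex outside $A$ is in \emph{no} clique reaching into $A$, so an $o(n)$-vertex absorber cannot swallow even a random leftover, let alone the adversarial, clique-poor residue that the near-factor stage actually leaves behind. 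Relatedly, there is no ``cheap scale $p=\omega(p^*)$'' available for building the absorber: all edges are present at density $(1+\eps)p^*$, and relative to a vertex set of size $o(n)$ this density is far \emph{below} the corresponding local threshold. Any absorber large enough to serve the last vertices must have $\Omega(n)$ vertices, at which point one is back to the original problem; this is exactly why the sharp constant is not obtained by absorption, and why Riordan and Heckel instead couple $G_{n,p}$ with the $(d+1)$-uniform hypergraph of its $K_{d+1}$-copies and transfer Kahn's sharp threshold for hypergraph perfect matchings. Your ingredient~(i) (a Friedgut--Bourgain sharpness statement for the spanning property $\f_H$) is also asserted rather than proved and is not routine for global properties, though it is the lesser issue; the coarse spread computation in your first step is fine and mirrors the paper's own Observation~\ref{Sobs}/Lemma~\ref{LDelta}, but it only reproduces the polylog-lossy bound, not Theorem~\ref{TKRH}.
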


Interest in $p_c(\f_H)$ for $H$ as in Theorem~\ref{TDelta} dates to at least 1992, when Alon and F\"uredi~\citep{AlonF} showed the upper bound $O(n^{-1/d}(\log n)^{1/d})$, and has intensified since~\citep{JKV}, motivated by the idea that $K_{d+1}$-factors should be the worst case. See~\citep{bdd2,bdd1} for history and the most recent results; with $O(1)$ in place of $1+o(1)$, Theorem~\ref{TDelta} is conjectured in~\citep{bdd1} and in the stronger ``universal'' form 
(see Section~\ref{Concluding}) in~\citep{bdd2}.

{Theorem~\ref{TKRH} probably extends to $r$-graphs and $d$ of 
the form $\C{s-1}{r-1}$.  This just needs extension of Theorem~1 of~\citep{Riordan} to 
$r$-graphs (suggested at the end of~\citep{Riordan}), which (with~\citep{HT2}) 
would give asymptotics of the threshold for $\h^r_{n,p}$ to contain a $\K^r_s$-factor
(where $\K^r_s$, recall, is the complete $r$ graph on $s$ vertices).}

\mn

Each of Theorems~\ref{TMr} and \ref{TDelta} begins with the following easy observations.
(The first, an approximate converse of Proposition~\ref{Tal6.7}, is
the trivial direction of LP duality.)
\begin{obs}
If an increasing $\f$ supports a $q$-spread measure, then $q_f(\f)< q$.
\end{obs}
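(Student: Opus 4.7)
The plan is to argue by a one-line weak LP duality, i.e., a direct integration of a weakly-small witness against the given spread measure. Suppose $\nu$ is a $q$-spread probability measure supported on $\f$, and let $p$ be any value for which $\f$ is weakly $p$-small, with witness $g:2^X\to\RR^+$ satisfying
\[
\sum_{S\sub T} g(S)\geq 1 \ \ \forall T\in \f, \qquad \sum_{S\sub X} g(S)\,p^{|S|}\leq 1/2.
\]

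The key step is to average the first inequality over $T$ drawn from $\nu$. Since $\nu$ is supported on $\f$, the integrand is at least $1$ almost surely, so Fubini gives
\[
1 \ \leq \ \sum_S g(S)\,\nu(\langle S\rangle) \ \leq \ \sum_S g(S)\, q^{|S|},
\]
where the second inequality is the $q$-spread condition. Combining with $\sum_S g(S)\, p^{|S|} \le 1/2 < 1$ rules out $p\geq q$: if $p\geq q$ then $p^{|S|}\geq q^{|S|}$ termwise and the $p$-sum would already be at least $1$. Hence every $p$ for which $\f$ is weakly $p$-small satisfies $p<q$.

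To upgrade this to the strict bound $q_f(\f)<q$ (rather than merely $q_f(\f)\le q$), I would note that the $\max$ in the definition of $q_f$ is attained: the feasibility condition is a linear program in $g$ on the finite-dimensional space $\RR^{2^X}_{\geq 0}$, and a routine compactness argument (bound the supports, then pass to a subsequential limit of witnesses as $p\uparrow q_f(\f)$) produces a $g$ that witnesses weak $p^\ast$-smallness for $p^\ast=q_f(\f)$. Applying the first part to this $p^\ast$ yields $q_f(\f)=p^\ast<q$. The only potential obstacle is this attainment step, but it is standard and isolated from the core of the argument, which is the two-line integration above.
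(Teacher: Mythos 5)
Your argument is correct and is exactly what the paper has in mind: the Observation is stated there without proof as ``the trivial direction of LP duality,'' which is precisely your averaging of the weakly-$p$-small witness $g$ against the spread measure $\nu$. Your care about attainment of the max in the definition of $q_f$ is legitimate and handled correctly (and one could alternatively get strictness from the factor-$2$ gap between $\sum g(S)q^{|S|}\ge 1$ and $\sum g(S)p^{|S|}\le 1/2$ without invoking attainment).
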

\nin
(More precisely, $q_f(\f)$ is the least $q$ such that $\f$ supports a probability measure $\nu$
with $\nu(\langle S\rangle)\leq 2q^{|S|}$ $\forall S$.)
\begin{obs}\label{Sobs}
Uniform measure on $\g_H$ is $q$-spread if and only if:
for $S\sub \K^r_n$ isomorphic to a subhypergraph of $H$, 
$\gs$ a uniformly random permutation of $V$,
and $H_0\sub \K_n^r$ a given copy of $H$,
\beq{Psig}
\pr(\gs(S)\sub H_0) \leq q^{|S|}.
\enq
\end{obs}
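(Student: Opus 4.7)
The plan is to translate the spread condition, which counts copies of $H$ containing a given set $S$, into a statement about a uniformly random permutation acting on $S$, using a standard double-counting/orbit argument.

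First I would fix a reference copy $H_0 \subseteq \K_n^r$ of $H$ and observe that $\g_H$ is precisely the orbit of $H_0$ under the natural action of $\mS_n = \mathrm{Sym}(V)$ on subhypergraphs of $\K_n^r$. Each element of $\g_H$ is hit by exactly $|\mathrm{Aut}(H_0)|$ permutations, so if $\gs$ is uniform on $\mS_n$, then $\gs(H_0)$ is a uniformly random element of $\g_H$. Writing $\nu$ for uniform measure on $\g_H$, this gives
\[
\nu(\langle S\rangle) \;=\; \pr_{\gs}\bigl(\gs(H_0)\supseteq S\bigr)
\]
for every $S\sub \K_n^r$.

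Next I would perform the change of variables $\gs \mapsto \gs^{-1}$, which again produces a uniform element of $\mS_n$. Since $\gs(H_0)\supseteq S$ is equivalent to $H_0\supseteq \gs^{-1}(S)$, this yields
\[
\nu(\langle S\rangle) \;=\; \pr_{\gs}\bigl(\gs(S)\sub H_0\bigr).
\]
This identity is the heart of the observation: the spread bound $\nu(\langle S\rangle)\le q^{|S|}$ for all $S$ is literally the same as $\pr(\gs(S)\sub H_0)\le q^{|S|}$ for all $S$.

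Finally I would handle the restriction to $S$ isomorphic to a subhypergraph of $H$: if $S$ admits no such embedding, then $\gs(S)\sub H_0$ is impossible (since $\gs(S)\cong S$ would be a subhypergraph of $H_0\cong H$), so the probability is $0$ and the bound holds automatically; hence only sets $S$ that embed into $H$ need be checked. There is no real obstacle here—the entire argument is a one-line orbit-counting computation combined with the inversion symmetry of the uniform measure on $\mS_n$; the only thing to be slightly careful about is keeping track of the factor $|\mathrm{Aut}(H_0)|$, which cancels because we are working with \emph{uniform} measure on the unlabeled copies rather than on labeled embeddings.
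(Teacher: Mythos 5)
Your argument is correct and is exactly the computation the paper leaves implicit when it calls this an ``easy observation'' (no proof is given in the paper): the orbit--stabilizer fact that $\gs(H_0)$ is uniform on $\g_H$, the inversion $\gs\mapsto\gs^{-1}$ giving $\nu(\langle S\rangle)=\pr(\gs(S)\sub H_0)$, and the remark that non-embeddable $S$ contribute probability $0$. Nothing is missing.
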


Proving
Theorem~\ref{TMr} is now just a matter of verifying \eqref{Psig}
with $q=O(dn^{-(r-1)})$, which we leave to the reader.
(It is similar to the proof of \eqref{dsp1}.)

\begin{proof}[Proof of Theorem~\ref{TDelta}]
The next assertion is the main thing we need to check here.   
\begin{lemma}\label{LDelta}
There is $\eps=\eps_d>0$ such that
if $H$ is as in Theorem~\ref{TDelta} and has no component isomorphic to $K_{d+1}$, then 
\beq{qfF}
q_f(\f_H) \leq n^{- (2/(d+1)+\eps)}=:q.
\enq
\end{lemma}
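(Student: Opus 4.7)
The plan is to exhibit the uniform measure on $\g_H$ (the law of $\sigma(H_0)$ for a fixed copy $H_0$ of $H$ and $\sigma$ a uniformly random permutation of $V=[n]$) as a $q$-spread measure for $q = n^{-2/(d+1) - \eps}$; by the LP-duality observation preceding Observation~\ref{Sobs}, this then gives $q_f(\f_H) \le q$, as desired. By Observation~\ref{Sobs}, it suffices to verify
\[
\pr(\sigma(S) \sub H_0) \le q^{|S|}
\]
for every nonempty $S \sub \K^2_n$ isomorphic to a subgraph of $H$. Throughout I write $v = v(S)$, $e = |S|$, and $c$ for the number of connected components of $S$.

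First I reduce to counting: $\pr(\sigma(S) \sub H_0) = M/(n)_v$, where $M$ is the number of injective graph homomorphisms $S \to H_0$ and $(n)_v = n!/(n-v)!$. Since $\Delta(H_0) \le d$, a standard BFS argument (pick the image of one vertex per component, then extend along a spanning forest with $\le d$ choices per tree edge) gives $M \le n^c d^{v-c}$. Combined with the Stirling bound $(n)_v \ge (n/e)^v$, this yields
\[
\pr(\sigma(S) \sub H_0) \le e^v (d/n)^{v-c}.
\]

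The heart of the argument is the following density claim: there exists $\eps_d > 0$, depending only on $d$, such that $(v(S) - c(S))/e(S) \ge 2/(d+1) + \eps_d$ for every nonempty subgraph $S \sub H$. Since the ratio is a weighted average across connected components, I may assume $S$ is connected, so $c = 1$, and split into three cases on $v$. If $v \ge d+2$, the degree bound $e \le dv/2$ yields $e/(v-1) \le d(d+2)/(2(d+1)) = (d+1)/2 - 1/(2(d+1))$. If $2 \le v \le d$, then $e \le \binom{v}{2}$ gives $e/(v-1) \le v/2 \le d/2$. If $v = d+1$, then since $\Delta(H) \le d$, any $K_{d+1}$ subgraph of $H$ would constitute an entire $K_{d+1}$ component, contradicting the hypothesis; hence $e \le \binom{d+1}{2} - 1$, yielding $e/d \le (d+1)/2 - 1/d$. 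The minimum slack across the three cases is $1/(2(d+1))$, so $\eps_d := 1/(2(d+1))$ suffices.

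Finally, since $S$ is a set of edges with no isolated vertices, $v \le 2e$, so $e^v \le e^{2e}$. Substituting,
\[
\pr(\sigma(S) \sub H_0) \le e^{2e}(d/n)^{e(2/(d+1) + \eps_d)} = \bigl[e^2 d^{2/(d+1) + \eps_d} n^{-(2/(d+1) + \eps_d)}\bigr]^e \le q^e
\]
for all sufficiently large $n$, with $\eps := \eps_d/2$. The main obstacle is the density claim for $v = d+1$: this is precisely where the ``no $K_{d+1}$ component'' hypothesis enters, converting the tight equality $e/(v-1) = (d+1)/2$ achieved by $K_{d+1}$ into a strict quantitative gap $\eps_d > 0$, without which the critical exponent $2/(d+1)$ could not be beaten.
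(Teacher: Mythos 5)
Your proposal is correct and follows essentially the same route as the paper: exhibit uniform measure on $\g_H$ as spread via LP duality, bound $\pr(\gs(S)\sub H_0)$ by counting injective homomorphisms along a spanning forest (your $M\le n^c d^{v-c}$ and $(n)_v\ge (n/e)^v$ are the paper's comparison of a uniform injection with a uniform map), and prove the density gap by the same three-case analysis on $v$, invoking the no-$K_{d+1}$-component hypothesis exactly at $v=d+1$. The one slip is the numerical value $\eps_d=1/(2(d+1))$: slack in the upper bound on $e/(v-1)$ does not transfer unchanged to $(v-1)/e$, and indeed for $d=2$ and $S=C_4$ one has $(v-1)/e=3/4<2/3+1/6$; the correct value extracted from your cases is $2(d+1)/(d(d+2))-2/(d+1)=2/(d(d+1)(d+2))$, and since the lemma only requires \emph{some} positive $\eps_d$, the argument stands after this correction.
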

\begin{proof}
We just need to show \eqref{Psig} for $q$ as in \eqref{qfF} and 
$S,H_0$ as in Observation~\ref{Sobs}, 
say with $W=V(S)$, $s=|S|$, and $f$ the size of a spanning forest of $S$.  
We may of course assume $S$ has no isolated vertices, so $w:=|W|\leq 2f$.  
We show 
\beq{dsp1}
\pr(\gs(S)\sub H_0) < (e^2d/n)^f
\enq
and
\beq{dsp2}
\frac{f}{s}\geq \frac{2(d+1)}{(d+2)d} = \frac{2}{d+1}+\eps_0,
\enq
where $\eps_0 = 1/((d+2)(d+1)d)$, implying that for any $\eps< \eps_0$, \eqref{Psig}
holds for large enough $n$.
\begin{proof}[Proof of \eqref{dsp1}] 
Let $\ga, \gb:W\ra V$ be, respectively, a uniform injection and a uniform map.  Then\begin{align*}
(d/n)^f&\geq  \pr(\gb(S)\sub H_0) ~ \geq ~ \pr(\mbox{$\gb$ is injective})
\pr(\gb(S)\sub H_0|\mbox{$\gb$ is injective})\\
&= (n)_wn^{-w}\pr(\ga(S)\sub H_0) ~> ~ e^{-2f}\pr(\gs(S)\sub H_0).\qedhere
\end{align*}
\end{proof}

\begin{proof}[Proof of \eqref{dsp2}] 
We may of course assume $S$ is connected, in which case we have $f=w-1$ and
upper bounds on $s$:  $\C{w}{2}$ if $w\leq d$; 
$\C{d+1}{2}-1$ if $w=d+1$; and $wd/2$ if $w\geq d+2$.
The corresponding lower bounds on $f/s$ are $2/d$, $2d/((d+2)(d+1)-2)$ and 
$2(d+1)/((d+2)d)$, the smallest of which is the last.
\end{proof}
This completes the proof of Lemma~\ref{LDelta}. 
\end{proof}
  
We are now ready for Theorem~\ref{TDelta}.  
Let $\vs=\vs_n$ be some slow $o(1)$ (e.g.\ $1/\log  n$).   
By Theorem~\ref{TKRH} there is $p_1\sim p^*(d,n)$
such that if $(d+1)\,|\,m>(1-\vs)n$ then $G_{m,p_1}$ contains a $K_{d+1}$-factor w.h.p.,
while by Lemma~\ref{LDelta} and Theorem~\ref{maintheorem} (or, more precisely,  
Remark~\ref{Rstupid}), there is $p_2$ with $p^*(d,n)\gg p_2\gg n^{-(2/(d+1)+\eps)}$ 
such that if $m\geq \vs n$ then for any given $m$-vertex $H'$ with $\gD(H')\leq d$,
$G_{m,p_2}$ contains (a copy of) $ H'$ w.h.p.

Let $H_1$ be the union of the copies of $K_{d+1}$ in $H$ (each of which must be a component of $H$),
$H_2=H-H_1$, and $n_i=|V(H_i)|$ (so $n_1+n_2=n$).  Let $G_1\sim G_{n,p_1}$ and
$G_2\sim G_{n,p_2}$ be independent on the common vertex set $V=[n]$ and
$G=G_1\cup G_2$.  Then $G\sim G_{n,p}$ with $p=1-(1-p_1)(1-p_2)\sim p^*(d,n)$,
and we just need to show $G\supseteq H$ w.h.p.  In fact we find each $H_i$ in the 
corresponding $G_i$, in order depending on $n_2$:
if $n_2\geq \vs n$, then w.h.p.\ $G_1$ contains $H_1$, say on vertex set $V_1$, and 
w.h.p.\ $G_2[V\sm V_1]$ contains $H_2$; and
if $n_2< \vs n$, then w.h.p.\ $G_2$ contains $H_2$ on some $V_2$, and 
w.h.p.\ $G_1[V\sm V_2]$ contains $H_1$.
\end{proof}

\section{Concluding remarks}\label{Concluding}

In closing we briefly mention (or recall) a few unresolved issues related to the present work.

\nin
\textbf{A.}
First, of course, it would be nice to prove Conjecture~\ref{littleTal}, which is now equivalent to Conjecture~\ref{CKK}.  

\nin
\textbf{B.}
It would be interesting to understand whether, in Shamir's and related problems, 
the $\log \ell$ emerging from our argument somehow reflects the coupon-collector
requirement (edges cover vertices) that drives the lower bounds.
Partly as a way of testing this, one might try to see if 
the present machinery can be extended to apply directly 
(rather than \emph{via}~\citep{Riordan,Heckel}) to
questions where coupon-collector considerations (correctly) predict a smaller gap, 
as in the fractional powers of $\log n$ in Theorem~\ref{TKRH}.

\nin
\textbf{C.}
The arguments of~\citep{Montgomery} and~\citep{bdd1} give stronger
``universality'' results; e.g.~\citep{Montgomery} says that the 
appropriate $G_{n,p}$ w.h.p.\ contains \emph{every} tree respecting the degree bound.
Whether this can be proved along present lines remains unclear;
if so, it would seem to be more a question of managing \emph{some} understanding of the class 
of universal graphs (with, of course, a view to the spread)
than of extending Theorem~\ref{maintheorem}.

\nin
\textbf{D.}
As mentioned following Corollary~\ref{FSbd}, what prevents us from extending 
to other values of the dimension $k$ is inadequate control of the spread.
(Here it doesn't really matter whether we think of ``assignments'' or of the threshold for 
containing a member of the $\h$ in \eqref{xiH}.)
The difficulty is the same for the related problem of thresholds for existence of
designs.  We don't have anything to suggest in the way of a remedy and just indicate one issue,
for simplicity sticking to Steiner triple systems (STS's; see~\citep{vw} for background);
thus $X=\K^3_n$ 
(with $n\equiv 1$ or $3$ $\pmod{6}$), $\h$ is the hypergraph of STS's, 
and for the spread (which should be $\Theta(1/n)$),
we may take 
\beq{KSTS}
\kappa = \min_{S\sub X}\left(|\h|/|\h\cap\langle S\rangle|\right)^{1/|S|}.
\enq
Results of Linial and Luria~\citep{LiLu} (upper bound) and Keevash~\citep{Keevash2} (lower bound) 
give 
\beq{LLK}
|\h| =((1+o(1))n/e^2)^{n^2/6}.
\enq
Viewed enumeratively this is very satisfactory, having been an old conjecture of Wilson~\citep{Wilson}.  But for present purposes, even ignoring our weaker understanding 
of $|\h\cap\langle S\rangle|$ 
(the number of completions of a partial STS $S$), it 
is not enough:  even if this quantity is, as one expects, roughly $(n/e^2)^{n^2/6-|S|}$,
the r.h.s.\ of \eqref{KSTS} can be dominated by the ``error'' factor $(1+o(1))^{n^2/(6|S|)}$ if
$S$ is slightly small and the $o(1)$ in \eqref{LLK} is negative.

\nin
\textbf{E.}
Finally, we recall a related conjecture from~\citep{KK}
(stated there only for graphs, but this shouldn't matter).
For $\f=\f_H$ as in Section~\ref{Applications}, let
$p_{\Ee}(\f)$ be the least $p$ such that for every $H'\sub H$
the expected number of (unlabeled) copies of $H'$ in $\h^r_{n,p}$
is at least 1.  Then $p_{\Ee}(\f)/2$ is again a trivial lower bound on $p_c(\f)$---and,
where it makes sense, probably more intuitive than $q(\f)$ or $q_f(\f)$---and 
from~\citep[Conjecture 2]{KK} we have:
\begin{conjecture}\label{CKK2}
There is a universal $K$ such that for every $\f=\f_H$ as above,
\[   
p_c(\f) \le Kp_{\Ee}(\f)\log |X|.
\]   
\end{conjecture}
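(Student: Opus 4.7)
The plan is to apply Theorem~\ref{maintheorem} and reduce Conjecture~\ref{CKK2} to the inequality $q_f(\f_H) = O(p_\Ee(\f_H))$. Since $\ell(\f_H) = |H| \leq |X|$, Theorem~\ref{maintheorem} gives $p_c(\f_H) = O(q_f(\f_H) \log |X|)$, so such a comparison would immediately yield the conjecture. By Proposition~\ref{Tal6.7} together with its easy converse (the first observation in Section~\ref{Applications}), this reduction is equivalent to exhibiting a $(Kp_\Ee(\f_H))$-spread probability measure supported on $\f_H$.

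The first candidate is the uniform measure $\nu$ on the set $\g_H$ of copies of $H$ in $\K^r_n$. By Observation~\ref{Sobs}, its spread at any $S \cong H' \subseteq H$ is $c(S)/(n)_{|V(S)|}$, where $c(S)$ is the number of embeddings of $S$ into $H$; on the other hand, the constraint $H' = S$ in the definition of $p_\Ee$ forces $p_\Ee(\f_H)^{|S|}$ to be at least roughly $1/(n)_{|V(S)|}$ (up to automorphism factors). So uniform-on-copies is $O(p_\Ee)$-spread precisely when $c(S)^{1/|S|}$ is bounded uniformly in $S$, a condition met in the cases underlying Theorems~\ref{TMr} and~\ref{TDelta} via the very embedding estimates driving those proofs. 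The first step of the plan is thus to identify a clean class of $H$ on which uniform-on-copies already suffices.

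The main obstacle is handling $H$ for which the uniform measure is too concentrated, namely $H$ containing super-exponentially many (in $|S|$) copies of some $S$ whose intrinsic density does not itself make $p_\Ee(\f_H)$ correspondingly small. The natural remedy is to tilt $\nu$, weighting each copy of $H$ inversely to the number of over-represented subgraph patterns it contains, so as to move mass away from configurations where dense subgraphs cluster. Carrying this out uniformly over $H$ while simultaneously controlling all $H' \subseteq H$ looks to me like the heart of the problem: $H$ may admit many isomorphism types of subgraphs making competing demands, and no single tilt will obviously trade them off correctly. A random-greedy construction that builds $H$ edge-by-edge, biasing each step against already over-represented dense patterns, has the right shape for an attack, but I do not see how to make it work in general.

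A plausible indirect route, and probably the first thing to try, is first to settle Conjecture~\ref{littleTal}: with it in hand the target becomes the integer inequality $q(\f_H) = O(p_\Ee(\f_H))$, a purely combinatorial statement about covers of copies of $H$ by subhypergraphs. Here the natural covers use copies of a densest subgraph $H' \subseteq H$, and the question becomes whether such a cover, balanced across the isomorphism types of dense subgraphs, always matches the $p_\Ee$ bound up to a constant factor. This reformulation is more amenable to an inductive argument on $|H|$ than its fractional counterpart, and sidesteps the need to build a single tilted measure that works against all subgraphs simultaneously.
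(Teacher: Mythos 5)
The statement you were asked to prove is Conjecture~\ref{CKK2}, which the paper does \emph{not} prove: it is restated from \citep{KK} as an open problem in Section~\ref{Concluding}, and the paper's only remark on it is exactly the reduction you propose --- namely that it (indeed with $\log|X|$ improved to $\log|H|$) would follow from Theorem~\ref{maintheorem} together with a positive answer to the question of whether $q_f(\f_H)=O(p_{\Ee}(\f_H))$ always holds. So your first paragraph correctly identifies the intended route, and your translation of the problem via Proposition~\ref{Tal6.7} and Observation~\ref{Sobs} into the existence of an $O(p_{\Ee}(\f_H))$-spread measure supported on copies of $H$ is the right reformulation.

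But the proposal is not a proof, and you essentially say so yourself. The entire content of the conjecture, after the (easy and known) reduction, is the comparison $q_f(\f_H)=O(p_{\Ee}(\f_H))$, and that is precisely where you stop: the uniform measure works only when every subgraph $S\sub H$ has at most $K^{|S|}|\mathrm{Aut}(S)|$ embeddings into $H$ (your ``clean class''), the tilting/random-greedy idea for the general case is left entirely unexecuted (``I do not see how to make it work in general''), and the proposed detour through Conjecture~\ref{littleTal} routes the problem through another statement that the paper explicitly lists as open and equivalent to Conjecture~\ref{CKK}. One further caution on the reduction itself: Theorem~\ref{maintheorem} gives $p_c(\f_H)=O(q_f(\f_H)\log\ell(\f_H))$ with $\ell(\f_H)=|H|\le|X|$, so that part is fine; the gap is solely, but fatally, the unproven spread construction. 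As it stands you have reproduced the paper's own observation about what would suffice, together with a plausible but incomplete plan of attack on a problem the authors themselves leave unresolved.
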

\nin
Again, we can presumably replace $\log |X|$ by $\log |H|$, as would now follow from 
a positive answer to the obvious question:  do we always have $q_f(\f) =O(p_{\Ee}(\f))$?

\section*{Acknowledgments}
The first, second and fourth authors were supported by NSF grant DMS-1501962 and BSF Grant 2014290.
The third author was supported by NSF grant DMS-1800521.

\bibliographystyle{amsplain}
\bibliography{frac_thresh}

\end{document}